\newtheorem{proposition}{Proposition}[section]
\newtheorem{theorem}[proposition]{Theorem}
\newtheorem{corollary}[proposition]{Corollary}
\providecommand{\algorithmname}{Algorithm}
\newcounter{algo}
\providecommand{\algorithmname}{Algorithm}
\newcounter{algobis}
\newtheorem{example}[proposition]{Example}
\newtheorem{remark}[proposition]{Remark}
\newenvironment{proof}{\noindent\textbf{Proof.}
  }{\hspace*{\fill}$\Box$ \\[1em]}
\renewcommand{\Re}{{\mathbb R}}
\def\trt{^{\scriptscriptstyle T}}
\title{A Bridge between Bilevel Programs and Nash Games}
\author[1]{Lorenzo Lampariello} 
\author[2]{Simone Sagratella}
\affil[1]{Department of Business Studies, Roma Tre University, Via D'Amico 77, 00145 Roma, Italy\\

lorenzo.lampariello@uniroma1.it.}
\affil[2]{Department of Computer, Control and Management Engineering Antonio Ruberti, Sapienza University of Rome, Via Ariosto 25, 00185 Roma, Italy\\

sagratella@dis.uniroma1.it. The work of this author's research has been partially supported by Avvio alla Ricerca 2015 Sapienza University of Rome, under grant 488.}
\begin{document}

\maketitle

\begin{abstract}
We study connections between optimistic bilevel programming problems and Generalized Nash Equilibrium Problems (GNEP)s.
Inspired by the optimal value approach, we propose a new GNEP model that incorporates some taste of hierarchy and turns out to be related to the bilevel program.
We provide a complete theoretical analysis of the relationship between the vertical bilevel problem and our ``uneven'' horizontal model: we define classes of problems for which solutions of the bilevel program can be computed by finding equilibria of our GNEP.
Furthermore, from a modelistic standpoint, by referring to some applications in economics, we show that our ``uneven'' horizontal model lies between the vertical bilevel model and a ``pure'' horizontal  game.

\noindent \textbf{Keywords:} {Bilevel programming \and Generalized Nash Equilibrium Problem (GNEP) \and Hierarchical optimization problem \and Stackelberg game}
\end{abstract}

\section{Introduction}
We aim at building a bridge between optimistic bilevel programming problems and generalized Nash equilibrium problems. This kind of study, as far as we are aware, has never been considered in the literature. In particular, we wish to point out differences and similarities between two-level optimization and one-level game models. Besides being of independent theoretical and modelistic interest, this analysis gives a new perspective on bilevel problems.     

Bilevel programming is a fruitful modeling framework that is widely used in many fields, ranging from economy and engineering to natural sciences (see \cite{colson2007overview}, the fundamental \cite{dempe2002foundations}, \cite{dempe2003annotated}, the recent \cite{dempe2013bilevel}, the references therein, the seminal paper \cite{von1934marktform}, and \cite{aussel2013electricity,hu2013existence} for recent applications). This problem has a hierarchical structure involving two decision, {\textit{upper}} and {\textit{lower}}, levels. We focus on the more general and challenging case in which the lower level program is not assumed to have a unique solution. We recall that, whenever lower level solutions are non-uniquely determined, the definition itself of the bilevel program is ambiguous. With this in mind, in this work we refer to the most common optimistic vision. Roughly speaking, in optimistic bilevel problems a decision is taken, at the {\textit{upper}} level, by considering two blocks of variables, namely $x$ and $y$; but, in turn, $y$ is implicitly constrained by the reaction of a subaltern ({\textit{lower}} level) part to the choice of $x$. Thus, bilevel programs can be viewed, in some sense, as a special two-agents optimization. The two agents play here an asymmetric role, in that the variable block $x$ is controlled only by the upper level agent, while the choice of the second block $y$ is influenced by both the upper and the lower level agents. It is precisely this asymmetrically shared influence on the variable blocks that makes bilevel problems inherently hard to solve. It is worth noting that, whenever there is not such a thorny relationship between the agents, things become {\em conceptually} simpler. Indeed, on the one hand, if all the variables are controlled by both the agents, we have a pure hierarchical problem (in Section \ref{sec:ref} we show that this problem has the same set of solutions of a suitable one-level generalized Nash equilibrium problem); while, on the other hand, with $x$ being controlled by the upper level agent, if $y$ is controlled only by the lower level agent, we get a generalized Nash equilibrium problem, in which the two agents act as players at the same level (see Section \ref{sec:backg}).

Optimistic bilevel problems have been studied in two different versions (see \cite{zemkoho2014solving} for a rather complete discussion on this topic): the Original optimistic Bilevel programming Problem (OBP)
\begin{equation}\label{ex: origoptim}
\begin{array}{cl}
\underset{x}{\mbox{minimize}} & \min_y\{F(x, y): \, y \in S(x)\} \\
\mbox{s.t.} & x \in X,
\end{array}
\end{equation}
and the Standard optimistic Bilevel programming Problem (SBP)
\begin{equation}\label{eq: bilevel}
\begin{array}{cl}
\underset{x,y}{\mbox{minimize}} & F(x,y)\\
\mbox{s.t.} & x \in X\\[5pt]
 & y \in S(x),
\end{array}
\end{equation}
where $F: \Re^{n_1} \times \Re^{n_2} \to \Re$, $X \subseteq \Re^{n_1}$ and the set-valued mapping $S: \Re^{n_1} \rightrightarrows \Re^{n_2}$ describes the solution set of the following lower level parametric optimization problem:
\begin{equation}\label{eq: follower}
\begin{array}{cl}
\underset{w}{\mbox{minimize}} & f(x,w)\\
\mbox{s.t.} & w \in U\\[5pt]
& g(x,w) \leq 0,
\end{array}
\end{equation}
where $f: \Re^{n_1} \times \Re^{n_2} \to \Re$ and $g: \Re^{n_1} \times \Re^{n_2} \to \Re^m$ and $U \subseteq \Re^{n_2}$.

As observed in \cite{dempe2012sensitivity,zemkoho2014solving}, OBP and SBP are equivalent in the global case but a local minimum of SBP may not lead to a local solution of OBP. We underline that, besides \cite{zemkoho2014solving}, which deals with OBPs, almost all other solution methods cope only with SBPs. The latter problems are structurally nonconvex and nonsmooth (see \cite{dempe2007new}); furthermore, it is hard to define suitable constraint qualification conditions for them, see, e.g., \cite{dempe2011generalized,ye2006constraint}. In fact, the study of provably convergent and practically implementable algorithms for the solution of even just SBPs is still in its infancy (see, for example, \cite{bard1983algorithm,dempe2003annotated,dempe2014solution,dempe2015solution,lin2014solving,mitsos2008global,outrata1990numerical,solodov2007explicit,vicente1994bilevel,xu2014smoothing,zemkoho2014solving}), as also witnessed by the scarcity of results in the literature. We remark that suitable reformulations of the SBP have been proposed in order to investigate optimality conditions and constraint qualifications, as well as to devise suitable algorithmic approaches: to date, the most studied and promising are optimal value and KKT one level reformulations (see \cite{dempe2013bilevel}, the references therein and \cite{outrata1988note,ye2010new}). As far as the KKT reformulation is concerned, it should be remarked that the SBP has often be considered as a special case of Mathematical Program with Complementarity Constraints (MPCC) (see, e.g., \cite{facchinei1999smoothing,fletcher2006local,luo1996mathematical}). Actually, this is not the case, as shown in \cite{dempe2012bilevel}. Indeed, in general, one can provably recast the SBP as an MPCC only when the lower level problem is convex and Slater's constraint qualification holds for all $x$. Moreover, even in this case, a local solution of the MPCC, which is what one can expect to compute (since the MPCC is nonconvex), may happen not to be a local optimal solution of the corresponding SBP and, even less, OBP.

Generalized Nash Equilibrium Problem (GNEP) is another important modeling tool in multi-agent contexts. GNEPs, that, unlike SBPs, are problems in which all agents act at the same level, have been extensively studied in the literature and many methods have been proposed for their solutions in the last decades, see, e.g., \cite{dreves2011solution,facchinei2011partial,facchlampsagr2011sur,facchinei2011computation,FukPang09,sagratella2015computing}. For further details, we refer the interested reader to \cite{FacchKanSu}. Finally, we would like to cite the interesting paper \cite{dorsch2013intrinsic}, which deals with both bilevel problems and GNEPs but without establishing connections between them, as we do.

In this work, building on the ideas set forth in \cite{lampariello2015matter}, we propose a new suitable GNEP model that is closely related to the SBP and proves to be connected with the OBP also. Our GNEP model is, in some sense, inspired by the optimal value approach, in that, when passing from the vertical structure of bilevel problems to the horizontal format of GNEPs, we exploit the value function idea to mimic the original relationship between the agents. Thus, despite its one-level structure, the latter GNEP incorporates some taste of hierarchy. 

To be more specific, here we summarize the theoretical results about the relationship between SBP/OBP and our GNEP model. In Theorem \ref{th: solution sets inclusion} we show that an equilibrium of our GNEP gives a feasible and, at least, suboptimal (possibly global optimal under some suitable conditions) solution for the corresponding SBP. With Proposition \ref{pr: easy points}, we define a particular type of global solutions of the SBP that, in any case, can be computed by finding an equilibrium of our GNEP. With Corollary \ref{co: g vanish} and with Theorem \ref{th: solution set equiv}, we identify classes of problems (including Stackelberg games and pure hierarchical optimization problems, see Remarks \ref{rm:Stack} and \ref{rm: nollx}, respectively) for which an equilibrium of our GNEP always leads to a global solution of the SBP. We remind that global solutions of the SBP lead also to global solutions of the OBP. Thus, the previous relations hold also between equilibria and global optima of the OBP. In Subsection \ref{sec:locstr}, we introduce the concept of strong local minima of the SBP: unlike general local solutions of the SBP, strong local minima enjoy the nice property to lead also to local solutions of the OBP (see Proposition \ref{th: strlocoptim}). With Theorem \ref{th: local solution sets inclusion} we give sufficient conditions for an equilibrium of our GNEP to lead to a strong local minimum of the SBP and, thus, also to a local minimum of the OBP. Section \ref{sec:ref} is equipped with several examples: in particular, we wish to cite Example \ref{ex:dempedutta} in which we compare our GNEP to the classical MPCC reformulation.

Relying on the previous theoretical results, in Section \ref{sec: application} we consider some applications in economics to show that our ``uneven'' horizontal framework, in some sense, lies between the vertical bilevel model and a ``pure'' horizontal  game. In a market with two firms producing some goods, we study the system's behavior in terms of outcomes values by employing three different points of view: vertical (for which a firm is the leader and the other one is the follower), horizontal (for which both firms act at the same level) and our uneven horizontal.




\section{Preliminaries}\label{sec:backg}

We briefly recall some basic facts.
When dealing with SBP/OBP \eqref{eq: bilevel}/\eqref{ex: origoptim} we rely on the following standard assumptions: $F, f: \Re^{n_1} \times \Re^{n_2} \to \Re$ and $g: \Re^{n_1} \times \Re^{n_2} \to \Re^m$ are continuous, and $X \subseteq \Re^{n_1}$ and $U \subseteq \Re^{n_2}$ are closed.

Let $W \triangleq \{(x,y): \, x \in X, \, y \in S(x)\}$ and $U \, \cap \, K(x)$, with $K(x) \triangleq \left\{v \in \Re^{n_2}: \, g(x,v) \leq 0\right\}$, denote the feasible sets of SBP \eqref{eq: bilevel} and of lower level problem \eqref{eq: follower}, respectively. 

A point $(x^*, y^*)$ is a global solution of SBP \eqref{eq: bilevel} if $(x^*,y^*) \in W$ and $F(x^*,y^*) \leq F(x,y),$ $\forall \, (x,y) \in W$. More explicitly, feasibility and optimality of $(x^*, y^*)$ can be equivalently rewritten in the following manner:  
\begin{align}
&(x^*,y^*) \in X \times U, \; f(x^*,y^*) \leq f(x^*,y) \; \forall y \in U \cap K(x^*), \; g(x^*,y^*) \leq 0 \label{eq: bilevel solution feasib expl}\\[5pt]
&F(x^*,y^*) \leq F(x,y) \; \forall \, (x,y) \in W,
 \label{eq: bilevel solution optim expl}
\end{align}
where $W = \big\{ (u,v) \in X \times U:\; f(u,v) \leq f(u,w) \; \forall w \in U \cap K(u), \; g(u,v) \leq 0 \big\}$.

We would like to mention two particularly interesting and well-studied classes of SBPs: (optimistic) Stackelberg games and pure hierarchical optimization problems. Stackelberg games are SBPs in which function $g$ does not depend on the upper variables $x$. On the other hand, when, at the lower level, the whole dependence on $x$ is dropped, the SBP boils down to the following pure hierarchical optimization problem:
\begin{equation}\label{eq: hier}
\begin{array}{cl}
\underset{x,y}{\mbox{minimize}} & F(x,y) \\
\mbox{s.t.} & x \in X \\
& y \in S,
\end{array}
\end{equation}
where $S$ denotes the solution set of the lower level problem
$$
\begin{array}{cl}
\underset{w}{\mbox{minimize}} & f(w)\\
& w \in U\\
& g(w) \le 0.
\end{array}
$$
As we have pointed out in the introduction, the characteristic aspect of SBP \eqref{eq: bilevel} is the hierarchical relationship between the leader and the follower: the two agents play here an asymmetric role, in that the variable block $x$ is controlled only by the upper level agent, while the second block $y$ is controlled by both the upper and the lower level agents. Question arises naturally on what happens if the leader loses control on $y$. In the latter case, we get the following GNEP:
\begin{equation}\label{eq: gnepnaif}
\begin{array}{clccl}
\underset{x}{\mbox{minimize}} & \; F(x,y) & \hspace{70pt} & \underset{y}{\mbox{minimize}} & \; f(x,y)\\
\mbox{s.t.} & x \in X &  & \mbox{s.t.} & y \in U\\[5pt]
 & & & & g(x,y)\leq 0.
\end{array}
\end{equation}
Note that in GNEP \eqref{eq: gnepnaif} the two agents are at the same level, unlike in SBP \eqref{eq: bilevel}. 

One may think that in problem \eqref{eq: gnepnaif} the follower has been promoted at an upper level, the same of the leader; but this is not the case: indeed, the follower acts in the same manner in \eqref{eq: bilevel} and in \eqref{eq: gnepnaif}. Is the leader who is downgraded at the follower's level: in fact, unlike problem \eqref{eq: gnepnaif}, where the leader can no longer directly control $y$, in SBP \eqref{eq: bilevel} the follower is like ``a puppet in leader's hands''. 

Finally, we denote by ${\mathcal{N}}(\bar x)$ the collection of open neighborhoods of $\bar x$ and by $\text{dom} \, M \triangleq \{x \, | \, M(x) \neq \emptyset\}$ the domain of $M: \mathbb R^n  \rightrightarrows \mathbb R^m$.

\section{Taking care of hierarchy: a new GNEP model}\label{sec:ref}

In the light of the observations in Section \ref{sec:backg}, we propose to address a GNEP that better takes into account the original hierarchy between agents. With the following GNEP, we aim at positioning the leader in an intermediate level between that in \eqref{eq: gnepnaif} and that in \eqref{eq: bilevel}.         
\begin{equation}\label{eq: gnep}
\begin{array}{clccl}
\underset{x,y}{\mbox{minimize}} & \; F(x,y) & \hspace{70pt} & \underset{w}{\mbox{minimize}} & \; f(x,w)\\
\mbox{s.t.} & (x,y) \in X \times U &  & \mbox{s.t.} & w \in U\\[5pt]
 & f(x,y) \leq f(x,w) & & & g(x,w)\leq 0.\\[5pt]
 & g(x,y) \leq 0 & & &
\end{array}
\end{equation}
We say that the player controlling $x$ and $y$ is the leader, while the other player is the follower. Note that, in the leader's problem, only the feasible set, in particular constraint $f(x,y) \le f(x,w)$, depends on the follower's variables $w$; on the other hand, as regards follower's problem, the coupling with the leader's strategy may happen at both the objective and the feasible set levels. 

GNEP \eqref{eq: gnep} is related to the SBP/OBP, as the forthcoming considerations clearly show (see Theorems \ref{th: solution sets inclusion}, \ref{th: solution set equiv}, \ref{th: local solution sets inclusion}, Proposition \ref{pr: easy points}, Corollary \ref{co: g vanish} and Examples \ref{ex:GNBP} and \ref{ex:BPGN}). We point out that, in order to devise GNEP \eqref{eq: gnep}, we draw inspiration from the optimal value approach (see \cite{dempe2013bilevel,outrata1988note,ye2010new}). Indeed, the structure of leader's feasible set in \eqref{eq: gnep} (in particular, constraint $f(x, y) \le f(x, w)$) is intended to mimic, in some sense, and to deal with the value function implicit constraint $f(x, y) \le \varphi(x)$, where
\begin{equation*}\label{eq:valfunc}
\varphi(x) \triangleq \min_y\{f(x, y): \, y \in K(x) \cap U\}
\end{equation*}
is the value function. In problem \eqref{eq: gnep} the leader takes back control of variables $y$: this fact and the presence of constraint $f(x,y) \le f(x,w)$, introducing some degree of hierarchy in a level playing field, keep memory of the original balance of power between leader and follower.

We note that, as one can expect, it is precisely the ``difficult'' constraint $f(x, y) \le f(x, w)$ that makes, in general, problem \eqref{eq: gnep} not easily solvable: because of the presence of such constraint, GNEP \eqref{eq: gnep} may lack convexity and suitable constraint qualifications are not readily at hand. However, as will become evident in the subsequent sections, one can still define classes of bilevel problems for which problem \eqref{eq: gnep} is practically solvable. 

Moreover, in view of the above considerations, GNEP \eqref{eq: gnep} may also be considered as an alternative modeling tool, of independent interest, for describing systems in which there is a hierarchical interaction between agents. 

We denote by
$$
T \triangleq \left\{(x,y) \in X \times U \, : \, g(x,y) \le 0 \right\} \;\; \text{and} \;\; U
$$
the ``private'' constraints sets, and by
$$
H(w) \triangleq \left\{(x,y) \in \Re^{n_1} \times \Re^{n_2} \, : \, f(x,y) \le f(x,w) \right\} \;\; \text{and} \;\; K(x)
$$
the ``coupling'' constraints sets of the leader and the follower, respectively. Moreover, let $V(w) \triangleq T \cap H(w)$ be the feasible set of the leader. 

A solution, or an equilibrium, of GNEP \eqref{eq: gnep} is a triple $(x^*, y^*, w^*)$ such that
\begin{align}
 &(x^*,y^*) \in X \times U, \;\; f(x^*,y^*) \leq f(x^*,w^*), \;\; g(x^*,y^*) \leq 0, \label{eq: gnep solution feasib leader}\\[5pt]
 &F(x^*,y^*) \leq F(x,y), \; \forall \, (x,y) \in V(w^*), \label{eq: gnep solution optim leader}\\[5pt]
 & w^* \in U, \;\; g(x^*,w^*) \leq 0, \label{eq: gnep solution feasib follower}\\[5pt]
 &f(x^*,w^*) \leq f(x^*,w), \;\; \forall w \in U \cap K(x^*), \label{eq: gnep solution optim follower}
\end{align}
where $V(w^*) = \big\{ (u,v) \in X \times U: \; f(u,v) \leq f(u,w^*), \; g(u,v) \leq 0 \big\}$.
Conditions \eqref{eq: gnep solution feasib leader}-\eqref{eq: gnep solution optim leader} and \eqref{eq: gnep solution feasib follower}-\eqref{eq: gnep solution optim follower} state feasibility and optimality of $(x^*, y^*, w^*)$ for leader's problem and for follower's problem, respectively.

\subsection{Global solutions}
The following Theorem \ref{th: solution sets inclusion} allows us to establish relations between equilibria of GNEP \eqref{eq: gnep} and global solutions of  SBP \eqref{eq: bilevel} and, thus, of OBP \eqref{ex: origoptim}. On the one hand, Theorem \ref{th: solution sets inclusion} gives a sufficient condition for an equilibrium of GNEP \eqref{eq: gnep} to lead to a global solution of the SBP/OBP; on the other hand, as the subsequent developments in this section clearly show, it provides a theoretical base to define classes of bilevel problems that are tightly connected to the GNEP (see Corollary \ref{co: g vanish}, Theorem \ref{th: solution set equiv}, and Remarks \ref{rm:Stack} and \ref{rm: nollx}).
\begin{theorem}\label{th: solution sets inclusion}
Let $(x^*, y^*, w^*)$ be an equilibrium of GNEP \eqref{eq: gnep}. Then
\begin{enumerate}
 \item[(i)] $(x^*, y^*)$ is a feasible point for SBP \eqref{eq: bilevel}, that is $(x^*, y^*) \in W$;
 \item[(ii)] if $g(x,w^*)\leq 0$ for all $x$ such that there exists $y$ with $(x,y) \in W$ and $F(x,y) \leq F(x^*,y^*)$, then $(x^*, y^*)$ is a global solution of SBP \eqref{eq: bilevel}.
\end{enumerate}
\end{theorem}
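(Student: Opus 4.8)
The plan is to verify the two claims directly from the equilibrium conditions \eqref{eq: gnep solution feasib leader}--\eqref{eq: gnep solution optim follower}, using the explicit description of $W$ recalled just after \eqref{eq: bilevel solution optim expl}, namely that $(x^*,y^*) \in W$ amounts to the three requirements $(x^*,y^*) \in X \times U$, $\; f(x^*,y^*) \le f(x^*,w)$ for all $w \in U \cap K(x^*)$, and $g(x^*,y^*) \le 0$.

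For part (i), I would check these three requirements in turn. The membership $(x^*,y^*) \in X \times U$ and the constraint $g(x^*,y^*) \le 0$ are already granted by the leader's feasibility condition \eqref{eq: gnep solution feasib leader}, so nothing is needed there. The only nontrivial point is the lower-level optimality $f(x^*,y^*) \le f(x^*,w)$ for every $w \in U \cap K(x^*)$, which I would obtain by chaining two inequalities: first the leader's coupling constraint $f(x^*,y^*) \le f(x^*,w^*)$ from \eqref{eq: gnep solution feasib leader}, then the follower's optimality $f(x^*,w^*) \le f(x^*,w)$ from \eqref{eq: gnep solution optim follower}, valid for all $w \in U \cap K(x^*)$. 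Composing the two yields exactly the desired inequality, completing (i).

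For part (ii), I would argue by contradiction. Suppose $(x^*,y^*)$ were not a global solution of \eqref{eq: bilevel}; since it is feasible by (i), there would exist $(\bar x, \bar y) \in W$ with $F(\bar x, \bar y) < F(x^*,y^*)$. The key step is to prove that such a point is admissible for the leader when the follower plays $w^*$, i.e. $(\bar x, \bar y) \in V(w^*)$. Membership in $X \times U$ and $g(\bar x, \bar y) \le 0$ follow at once from $(\bar x, \bar y) \in W$; the delicate requirement is the coupling constraint $f(\bar x, \bar y) \le f(\bar x, w^*)$. To obtain it I would invoke the hypothesis of (ii) with this $\bar x$: indeed $\bar x$ admits $\bar y$ with $(\bar x, \bar y) \in W$ and $F(\bar x, \bar y) \le F(x^*,y^*)$, so the assumption gives $g(\bar x, w^*) \le 0$; together with $w^* \in U$ from \eqref{eq: gnep solution feasib follower}, this places $w^*$ in $U \cap K(\bar x)$. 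Since $(\bar x, \bar y) \in W$ forces $f(\bar x, \bar y) \le f(\bar x, w)$ for all $w \in U \cap K(\bar x)$, specializing to $w = w^*$ delivers the coupling inequality. Hence $(\bar x, \bar y) \in V(w^*)$, and the leader's optimality \eqref{eq: gnep solution optim leader} yields $F(x^*,y^*) \le F(\bar x, \bar y)$, contradicting $F(\bar x, \bar y) < F(x^*,y^*)$.

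The crux of the whole argument, and the only place where the assumption of (ii) is actually used, is establishing that $w^*$ lies in the lower-level feasible set $U \cap K(\bar x)$ of the competing point $\bar x$. This is precisely what lets the leader, confronted with the follower's fixed response $w^*$, already ``see'' $(\bar x, \bar y)$ as an admissible deviation; without $g(\bar x, w^*) \le 0$ the pair $(\bar x, \bar y)$ could violate the leader's coupling constraint at $w^*$ and thereby escape the comparison in \eqref{eq: gnep solution optim leader}. I expect no further obstacle, the remaining manipulations being routine chaining of the defining inequalities.
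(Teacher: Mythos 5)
Your proposal is correct and follows essentially the same route as the paper: part (i) by chaining the leader's coupling constraint with the follower's optimality, and part (ii) by using the hypothesis to place $w^*$ in $U \cap K(\bar x)$ so that any better feasible point $(\bar x,\bar y)$ of the SBP lands in $V(w^*)$ and is ruled out by the leader's optimality. The only cosmetic difference is that you phrase (ii) as a contradiction, whereas the paper states it directly as the inclusion $W \cap {\cal L}^* \subseteq V(w^*)$ of the level set into the leader's feasible region.
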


\begin{proof}
Under the assumptions of the theorem, $(x^*, y^*, w^*)$ satisfy relations \eqref{eq: gnep solution feasib leader}-\eqref{eq: gnep solution optim follower}.

(i) We observe that \eqref{eq: gnep solution feasib leader}, \eqref{eq: gnep solution feasib follower} and \eqref{eq: gnep solution optim follower} together imply that $(x^*, y^*)$ satisfies \eqref{eq: bilevel solution feasib expl}, that is $(x^*, y^*) \in W$.

(ii) We need to show that \eqref{eq: bilevel solution optim expl} holds at $(x^*, y^*)$. Let us denote by ${\cal L}^*$ the level set of $F$ at $(x^*,y^*)$, and by $({\cal L}^*)^c$ its complement:
\begin{equation}\label{eq:L}
{\cal L}^*\triangleq \left\{ (x,y) \in \Re^{n_1} \times \Re^{n_2}: F(x,y) \leq F(x^*,y^*) \right\},
\end{equation}
\begin{equation}\label{eq:Lc}
({\cal L}^*)^c \triangleq \left\{ (x,y) \in \Re^{n_1} \times \Re^{n_2}: F(x,y) > F(x^*,y^*) \right\}.
\end{equation}
Let $(\bar x, \bar y)$ be any couple in $W \cap {\cal L}^*$: by assumptions, we have $g(\bar x,w^*)\leq 0$. Therefore, $w^* \in U \cap K(\bar x)$ and, since $(\bar x, \bar y) \in W$, in turn $(\bar x, \bar y) \in V(w^*)$ and
\begin{equation}\label{eq: in proof th 1}
W \cap {\cal L}^* \subseteq V(w^*).
\end{equation}
Thanks to \eqref{eq: gnep solution optim leader} and \eqref{eq: in proof th 1}, and noting that for every $(x,y) \in W \cap ({\cal L}^*)^c$ we have $F(x,y) > F(x^*,y^*)$, \eqref{eq: bilevel solution optim expl} holds at $(x^*, y^*)$. Hence, $(x^*, y^*)$ is a global solution of SBP \eqref{eq: bilevel}. 
\qed \end{proof}
It is worth noticing that condition (ii) also suggests that $(x^*,y^*)$ can be interpreted as a suboptimal point for SBP \eqref{eq: bilevel}. Indeed, we have $F(x^*,y^*) \le F(x,y)$ for every $(x,y) \in W$ with $x$ such that $g(x, w^*) \le 0.$

The following example gives a picture of the relationship between GNEP \eqref{eq: gnep} and SBP \eqref{eq: bilevel}, as stated in Theorem \ref{th: solution sets inclusion}.
\begin{example}\label{ex:GNBP}
Let us consider the following SBP:
\begin{equation}\label{ex: bilev}
\begin{array}{cl}
\underset{x,y}{\mbox{minimize}} & x^2+y^2 \\
\mbox{s.t.} & x \geq 1\\[5pt]
 & y \in S(x),
\end{array}
\end{equation}
where $S(x)$ denotes the solution set of the lower level problem
$$
\begin{array}{cl}
\underset{w}{\mbox{minimize}} & w\\
& x+w \geq 1,
\end{array}
$$
and the corresponding GNEP, that is,
\begin{equation}\label{ex: gnep}
\begin{array}{clccl}
\underset{x,y}{\mbox{minimize}} & \; x^2+y^2 & \hspace{70pt} & \underset{w }{\mbox{minimize}} & \; w\\
\mbox{s.t.} & x \geq 1 &  & \mbox{s.t.} & x+w \geq 1. \\[5pt]
 & y \leq w & & &\\[5pt]
 & x+y \geq 1 & & &
\end{array}
\end{equation}
Point $(1,0)$ is the unique solution of problem \eqref{ex: bilev}, while all the infinitely many points $(1-\lambda,\lambda,\lambda)$, with $\lambda \leq 0$, are equilibria of GNEP \eqref{ex: gnep}. In particular, we remark that $(1,0,0)$ is the only solution of GNEP \eqref{ex: gnep} that satisfies assumption (ii) of Theorem \ref{th: solution sets inclusion} (see Figure \ref{fig:ex121} and Figure \ref{fig:ex122}).
\begin{figure}[!ht]
 \begin{minipage}[t]{.48\textwidth}
   \centering
  \includegraphics[width=\textwidth]{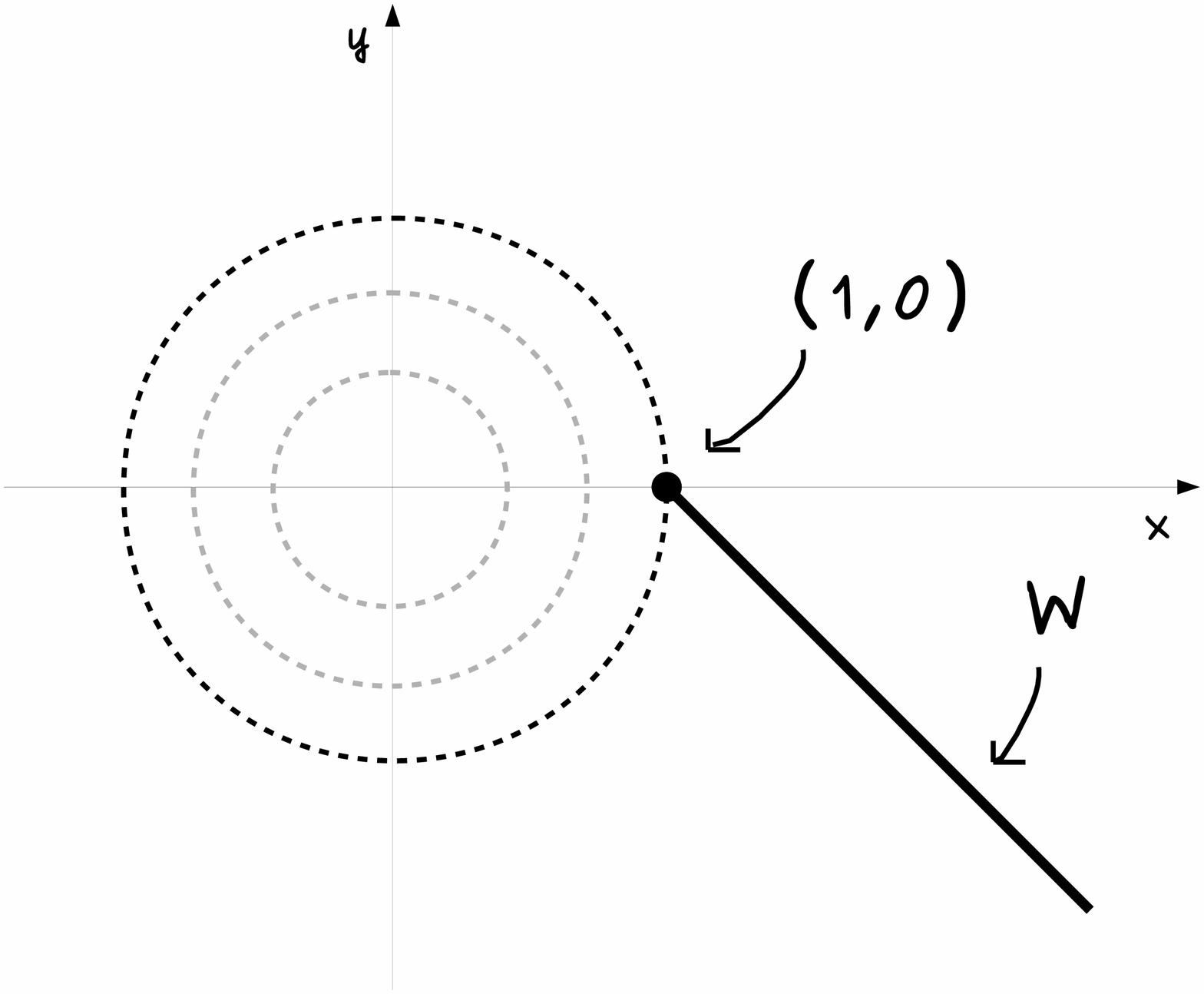}
   \vspace{-.4cm}
   \caption{The feasible set $W$ and the unique solution of SBP \eqref{ex: bilev}}\label{fig:ex121}
 \end{minipage}
 \ \hspace{.01\textwidth} \
 \begin{minipage}[t]{.48\textwidth}
  \centering
   \includegraphics[width=\textwidth]{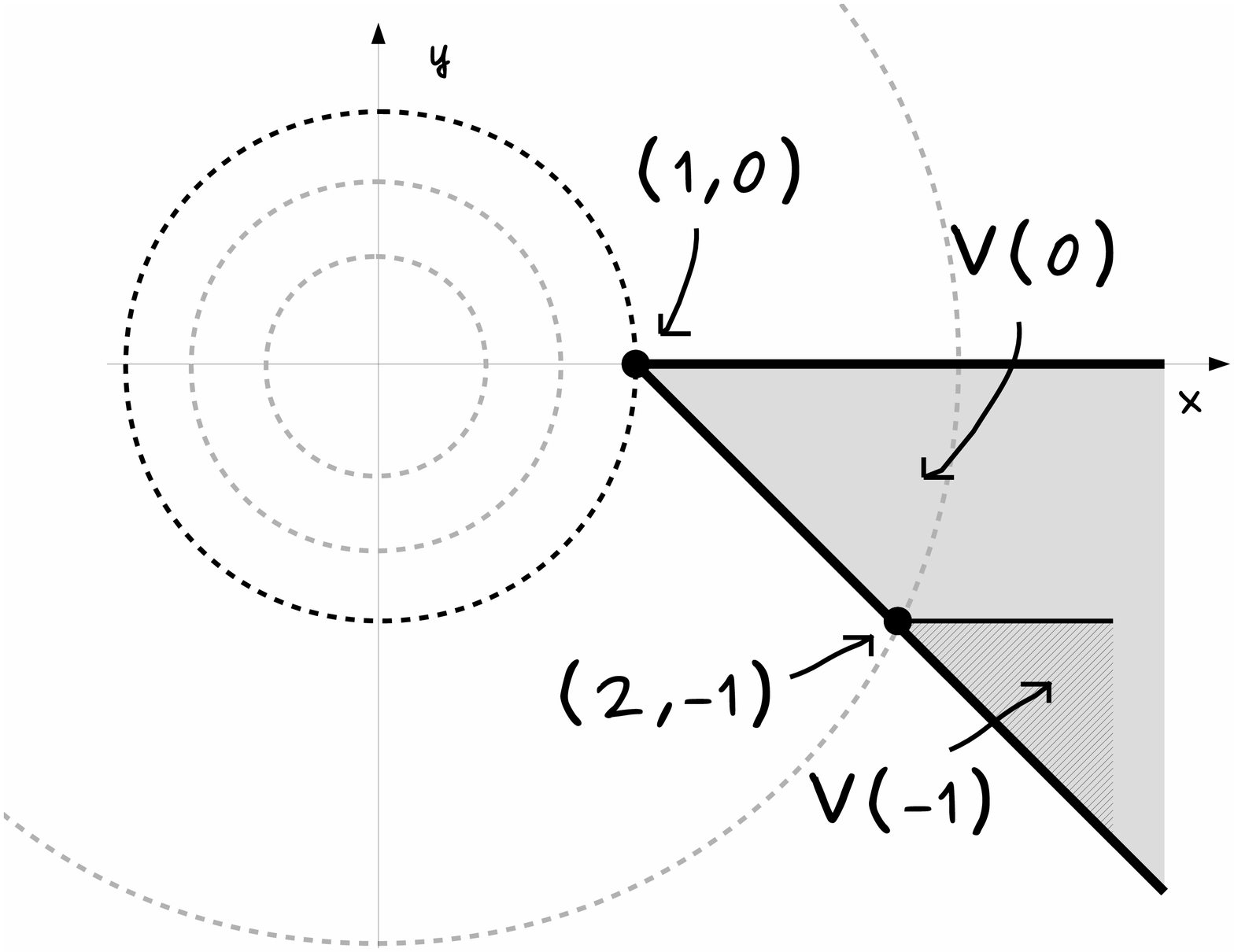}
   \vspace{-.4cm}
   \caption{A sketch of leader's problem in GNEP \eqref{ex: gnep}: the feasible set $V(w)$ and the corresponding solution are depicted for different values of $w$, namely $w=0$ and $w=-1$.}
\label{fig:ex122}
 \end{minipage}
\end{figure}
\hfill \qed \end{example}
It should be remarked (see Example \ref{ex:BPGN}) that the implications in Theorem \ref{th: solution sets inclusion} (ii) can not be reversed: indeed, in general, given a global solution $(x^*,y^*)$ of SBP \eqref{eq: bilevel}, $(x^*,y^*,y^*)$ may not be an equilibrium for GNEP \eqref{eq: gnep}.
\begin{example}\label{ex:BPGN}
\begin{figure}[!ht]
 \begin{minipage}[t]{.48\textwidth}
   \centering
  \includegraphics[width=\textwidth]{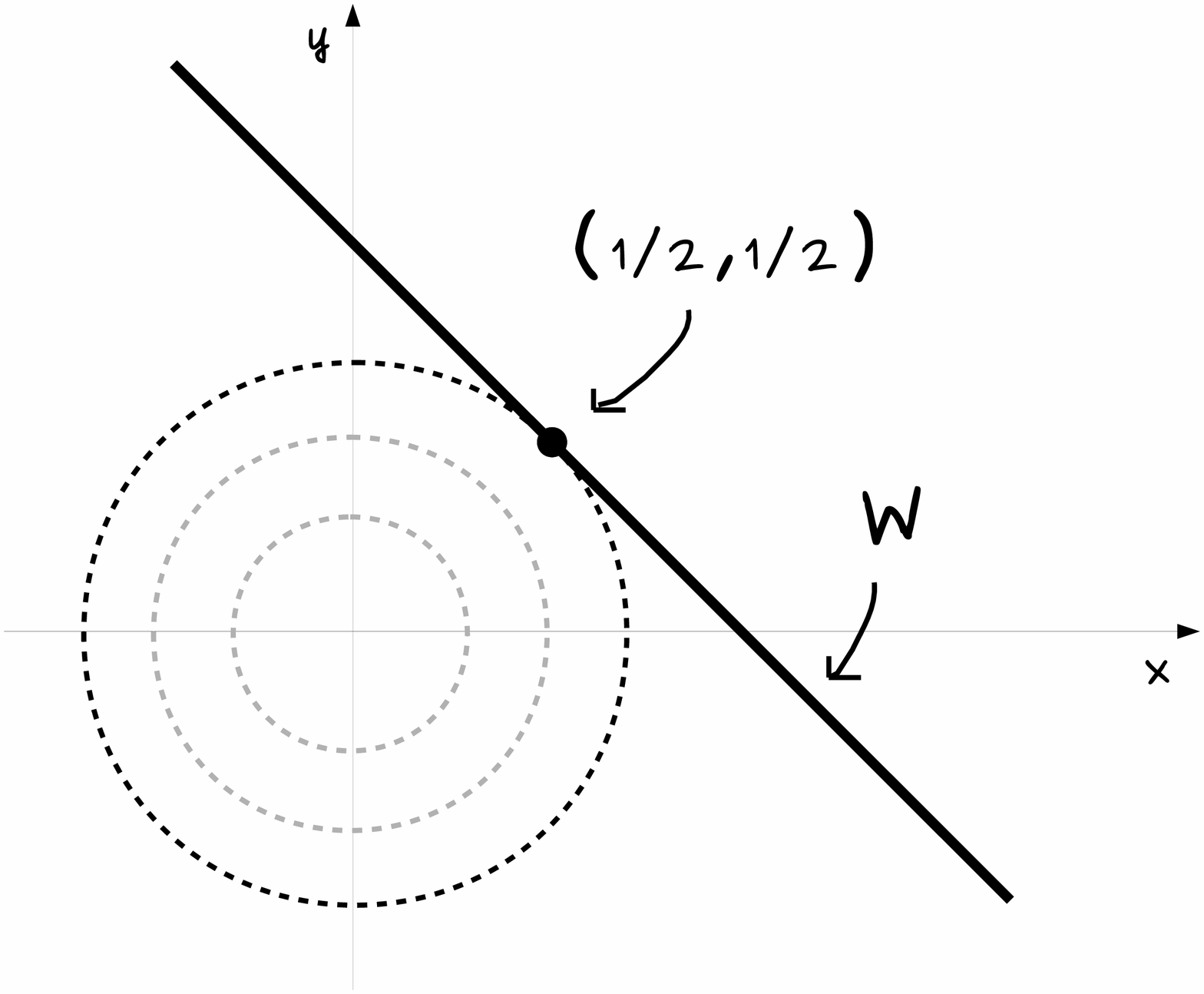}
   \vspace{-.4cm}
   \caption{The feasible set $W$ and the unique solution of SBP \eqref{ex: bilev 2}}\label{fig:ex141}
 \end{minipage}
 \ \hspace{.01\textwidth} \
 \begin{minipage}[t]{.48\textwidth}
  \centering
   \includegraphics[width=\textwidth]{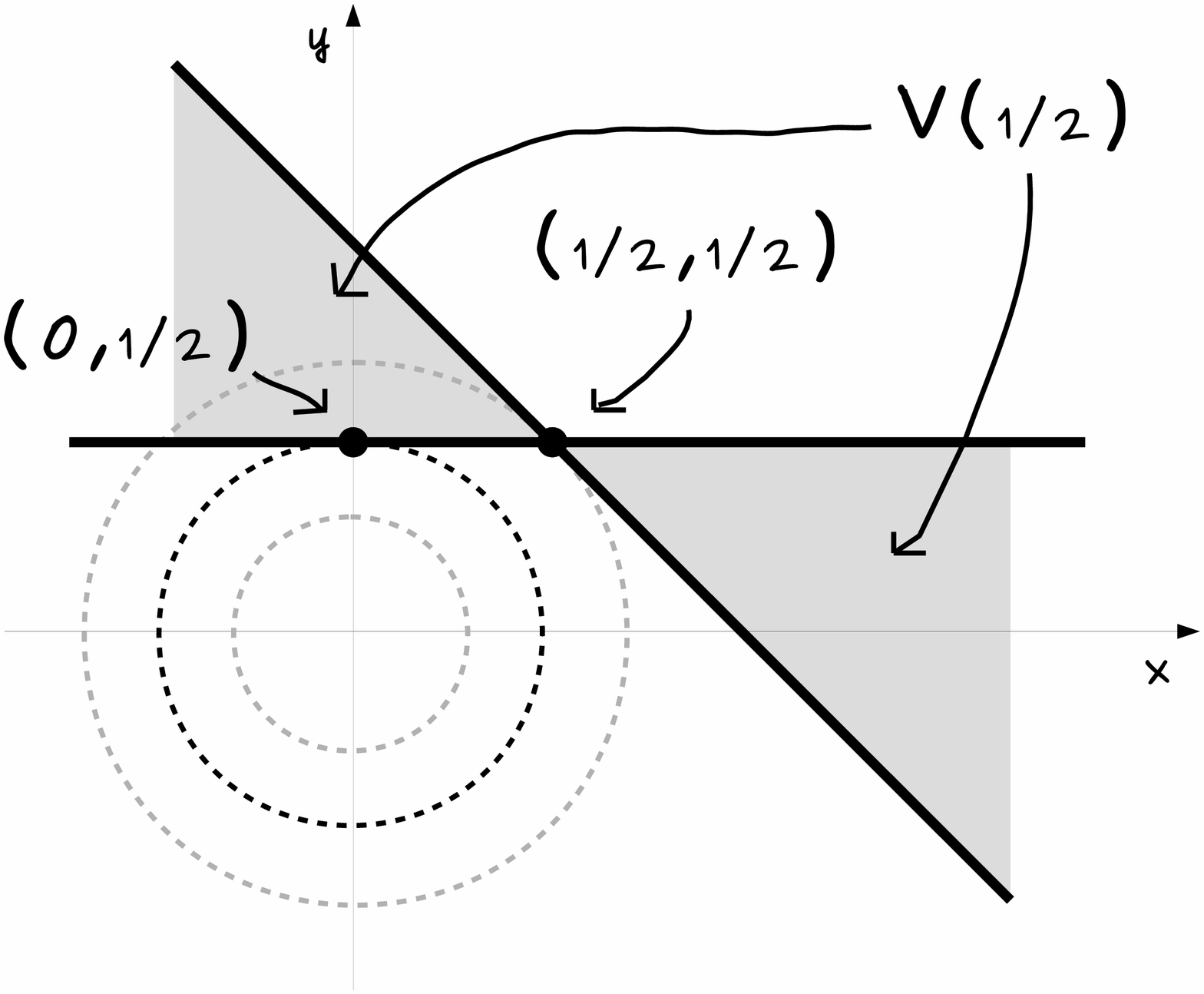}
   \vspace{-.4cm}
   \caption{A sketch of leader's problem in GNEP \eqref{ex: gnep 2}: the feasible set $V(w),$ which turns out to be a superset of $W$, and the corresponding solution are depicted for $w=1/2$.}
\label{fig:ex142}
 \end{minipage}
\end{figure}

Let us consider the following SBP:
\begin{equation}\label{ex: bilev 2}
\begin{array}{cl}
\underset{x,y}{\mbox{minimize}} & x^2+y^2\\
\mbox{s.t.} & y \in S(x),
\end{array}
\end{equation}
where $S(x)$ denotes the solution set of the lower level problem
$$
\begin{array}{cl}
\underset{w}{\mbox{minimize}} & (x+w-1)^2\\
\end{array}
$$
and the corresponding GNEP
\begin{equation}\label{ex: gnep 2}
\begin{array}{clccl}
\underset{x,y}{\mbox{minimize}} & \; x^2+y^2 & \hspace{50pt} & \underset{w}{\mbox{minimize}} & \; (x+w-1)^2.\\
\mbox{s.t.} & (x+y-1)^2\leq (x+w-1)^2 &  & 
\end{array}
\end{equation}
The unique solution of problem \eqref{ex: bilev 2} is $(x^*, y^*) = \left(\frac{1}{2},\frac{1}{2}\right)$. However, the triple $(x^*, y^*, w^*) = \left(\frac{1}{2},\frac{1}{2},\frac{1}{2}\right)$ is not an equilibrium of GNEP \eqref{ex: gnep 2}, since point $(\tilde x, y^*,w^*) = \left(0,\frac{1}{2},\frac{1}{2}\right)$ is feasible for the first player and $\tilde x^2 + (y^*)^2 < (x^*)^2 + (y^*)^2$ (see Figure \ref{fig:ex141} and Figure \ref{fig:ex142}). \hfill \qed
\end{example}
On the other hand, as also observed in \cite{allende2013solving}, strengthening conditions in Theorem \ref{th: solution sets inclusion}, one can define points for which the relation between SBP \eqref{eq: bilevel} and GNEP \eqref{eq: gnep} is stronger than that already established. 
\begin{proposition}\label{pr: easy points}
 Let $(x^*,y^*)$ belong to $W$ and be such that
 \begin{equation}\label{eq: easy bilevel solution optim expl}
  F(x^*,y^*) \leq F(x,y) \; \forall \, (x,y) \in T.
 \end{equation}
 Then
 \begin{enumerate}
  \item[(i)] $(x^*,y^*)$ is a global solution of SBP \eqref{eq: bilevel};
  \item[(ii)] for all $w^* \in U \cap K(x^*)$ such that $(x^*,w^*) \in H(y^*)$, $(x^*, y^*, w^*)$ is a solution of GNEP \eqref{eq: gnep}.
 \end{enumerate}
\end{proposition}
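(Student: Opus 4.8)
The plan is to lean on the single structural inclusion $W \subseteq T$ together with the defining optimality property $y^* \in S(x^*)$, and then verify the equilibrium conditions one at a time. First I would record the inclusion $W \subseteq T$: every $(x,y) \in W$ satisfies $x \in X$ and $y \in U \cap K(x)$, hence $y \in U$ and $g(x,y) \le 0$, so $(x,y) \in T$.

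For part (i), since $(x^*,y^*) \in W$ by hypothesis, feasibility for SBP \eqref{eq: bilevel} is granted. The optimality requirement \eqref{eq: bilevel solution optim expl} then follows immediately: the assumed inequality \eqref{eq: easy bilevel solution optim expl} holds over all of $T$, hence \emph{a fortiori} over the smaller set $W \subseteq T$. This settles (i) with no further work.

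For part (ii), I would fix any $w^* \in U \cap K(x^*)$ with $(x^*,w^*) \in H(y^*)$ and check the four conditions \eqref{eq: gnep solution feasib leader}--\eqref{eq: gnep solution optim follower}. Leader and follower feasibility come directly from the data: $(x^*,y^*) \in W$ gives $(x^*,y^*) \in X \times U$ and $g(x^*,y^*) \le 0$, while $w^* \in U \cap K(x^*)$ gives $w^* \in U$ and $g(x^*,w^*) \le 0$. The remaining leader inequality $f(x^*,y^*) \le f(x^*,w^*)$ in \eqref{eq: gnep solution feasib leader} follows from $y^* \in S(x^*)$ evaluated at the feasible point $w^* \in U \cap K(x^*)$. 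Leader optimality \eqref{eq: gnep solution optim leader} is then immediate from $V(w^*) = T \cap H(w^*) \subseteq T$ and the standing hypothesis \eqref{eq: easy bilevel solution optim expl}.

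The only step that requires combining two facts, and which I expect to be the main (if mild) obstacle, is follower optimality \eqref{eq: gnep solution optim follower}. Here the subtlety is that the membership $(x^*,w^*) \in H(y^*)$ unpacks to $f(x^*,w^*) \le f(x^*,y^*)$, which is the \emph{reverse} of the leader-feasibility inequality just established; so one must reconcile it with lower-level optimality. I would chain the two bounds: $(x^*,w^*) \in H(y^*)$ gives $f(x^*,w^*) \le f(x^*,y^*)$, whereas $y^* \in S(x^*)$ gives $f(x^*,y^*) \le f(x^*,w)$ for every $w \in U \cap K(x^*)$; concatenating yields $f(x^*,w^*) \le f(x^*,w)$ for all such $w$, which is exactly \eqref{eq: gnep solution optim follower} (indeed $w^* \in S(x^*)$, and the two bounds in fact force $f(x^*,w^*) = f(x^*,y^*)$). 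Having verified all four conditions, $(x^*,y^*,w^*)$ is an equilibrium of GNEP \eqref{eq: gnep}, completing (ii).
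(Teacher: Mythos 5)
Your proof is correct and follows essentially the same route as the paper's: part (i) via the inclusion $W \subseteq T$, and part (ii) by checking \eqref{eq: gnep solution feasib leader}--\eqref{eq: gnep solution optim follower} using $V(w^*) \subseteq T$ and the equality $f(x^*,w^*) = f(x^*,y^*)$ forced by combining $(x^*,w^*) \in H(y^*)$ with $y^* \in S(x^*)$. You merely spell out the steps that the paper's terser argument leaves implicit.
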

\begin{proof}
 (i) Condition \eqref{eq: easy bilevel solution optim expl} implies relation \eqref{eq: bilevel solution optim expl} since $W \subseteq T$.
 
 (ii) Relations \eqref{eq: gnep solution feasib leader}, \eqref{eq: gnep solution feasib follower} and \eqref{eq: gnep solution optim follower} follow from \eqref{eq: bilevel solution feasib expl} and the fact that $w^* \in U$, $g(x^*,w^*) \leq 0$ and $f(x^*,w^*) = f(x^*,y^*)$. Moreover, \eqref{eq: easy bilevel solution optim expl} implies \eqref{eq: gnep solution optim leader} since $V(w^*) \subseteq T$.
\qed \end{proof}
Points that satisfy conditions \eqref{eq: easy bilevel solution optim expl} can be considered as ``easy'' global solutions of SBP \eqref{eq: bilevel} and, thus, lead also to global solutions of OBP \eqref{ex: origoptim}: such points lie in $W$ but, as for optimality (see relation \eqref{eq: bilevel solution optim expl}), the lower level objective function plays no role for these solutions to be computed. Clearly, if $(x^*,y^*)$ is an ``easy'' solution of SBP \eqref{eq: bilevel}, then, in view of (ii), $(x^*,y^*,y^*)$ is an equilibrium of GNEP \eqref{eq: gnep}. 

In the following example, we present an SBP whose unique solution satisfies the assumptions in Proposition \ref{pr: easy points}.
\begin{example}\label{ex:easy}
Let us consider the following SBP:
\begin{equation}\label{ex: easy bilev}
\begin{array}{cl}
\underset{x,y_1,y_2}{\mbox{minimize}} & x^2+(y_1+y_2)^2 \\
\mbox{s.t.} & x \geq \frac{1}{2} \\[.3em]
            & (y_1,y_2) \in S(x),
\end{array}
\end{equation}
where $S(x)$ denotes the solution set of the lower level problem
$$
\begin{array}{cl}
\underset{w_1,w_2}{\mbox{minimize}} & w_1\\
& x+w_1+w_2 \geq 1\\
& w_1, w_2 \ge 0.
\end{array}
$$
The corresponding GNEP.
\begin{equation}\label{ex: easy gnep}
\begin{array}{clccl}
\underset{x,y_1,y_2}{\mbox{minimize}} & \; x^2+(y_1+y_2)^2 & \hspace{70pt} & \underset{w_1,w_2}{\mbox{minimize}} & \; w_1\\
\mbox{s.t.} & x \geq \frac{1}{2} &  & \mbox{s.t.} & x+w_1+w_2 \geq 1 \\
 & y_1 \leq w_1 & & & w_1, w_2 \geq 0, \\
 & x+y_1+y_2 \geq 1 & & & \\
 & y_1, y_2 \geq 0 & & &
\end{array}
\end{equation}
Clearly, $\left(\frac{1}{2},0,\frac{1}{2}\right)$ is the unique solution of problem \eqref{ex: easy bilev}, while $\left(\frac{1}{2},0,\frac{1}{2},0,\frac{1}{2}\right)$ is an equilibrium of GNEP \eqref{ex: easy gnep}; furthermore, $\left(\frac{1}{2},0,\frac{1}{2}\right)$ satisfies the assumptions in Proposition \ref{pr: easy points}. It is worth pointing out that ``easy'' solution $\left(\frac{1}{2},0,\frac{1}{2}\right)$ can not be calculated by simply minimizing $F(x, y)$ over set $T$: if we did this, indeed, we would obtain multiple solutions, namely any point $\left(\frac{1}{2},y_1,y_2\right)$ such that $y_1 + y_2 = \frac{1}{2}$ and $y_1, \,y_2 \ge 0$. But, among these points, only $\left(\frac{1}{2},0,\frac{1}{2}\right)$ belongs to $W$. Thus, actually, the ``easy'' solutions are not so easy to be calculated! Indeed, although, for optimality, the lower level objective function plays no role for these points to be computed, nonetheless the ``easy'' solutions must belong to the feasible set $W$. \hfill \qed       
\end{example}
Clearly, as stated above, in general, solving GNEP \eqref{eq: gnep} may happen not to lead to a solution of SBP \eqref{eq: bilevel}. However, Theorem \ref{th: solution sets inclusion}, as well as Proposition \ref{pr: easy points}, establish sufficient conditions for an equilibrium of GNEP \eqref{eq: gnep} to provide a global solution of SBP/OBP \eqref{eq: bilevel}/\eqref{ex: origoptim}. Relying on these conditions, with the following Corollary \ref{co: g vanish} and Theorem \ref{th: solution set equiv} we present two significant classes of problems for which one can establish an even deeper connection between global solutions of  SBP/OBP \eqref{eq: bilevel}/\eqref{ex: origoptim} and those of GNEP \eqref{eq: gnep}. 

For example, if the lower level feasible set does not depend on upper level variables $x$, then the requirements of Theorem \ref{th: solution sets inclusion} (ii) are trivially satisfied and the following result, whose proof is omitted, holds.
\begin{corollary}\label{co: g vanish}
Suppose that, at the lower level, the feasible set mapping $U \cap K$ is fixed in $X \cap \text{dom}(U \cap K)$. If $(x^*, y^*, w^*)$ is an equilibrium of GNEP \eqref{eq: gnep}, then $(x^*, y^*)$ is a global solution of SBP \eqref{eq: bilevel}.
\end{corollary}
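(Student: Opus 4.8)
The plan is to show that the fixedness hypothesis makes the sufficient condition of Theorem \ref{th: solution sets inclusion} (ii) hold automatically, so that the conclusion is immediate. Let $(x^*,y^*,w^*)$ be an equilibrium of GNEP \eqref{eq: gnep}. The first thing I would record is that, by the follower feasibility conditions \eqref{eq: gnep solution feasib follower}, we have $w^* \in U$ and $g(x^*,w^*) \le 0$, that is $w^* \in U \cap K(x^*)$; in particular $U \cap K(x^*) \neq \emptyset$, so that $x^* \in X \cap \text{dom}(U \cap K)$.

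Next, I would verify the hypothesis of Theorem \ref{th: solution sets inclusion} (ii), namely that $g(x,w^*) \le 0$ for every $x$ for which there exists $y$ with $(x,y) \in W$ and $F(x,y) \le F(x^*,y^*)$. Fix any such $x$. Since $(x,y) \in W$ forces $x \in X$ and $y \in S(x) \subseteq U \cap K(x)$, the lower level feasible set $U \cap K(x)$ is nonempty, hence $x \in X \cap \text{dom}(U \cap K)$ as well. The fixedness assumption then yields $U \cap K(x) = U \cap K(x^*)$, and since $w^* \in U \cap K(x^*)$ we conclude $w^* \in U \cap K(x)$, i.e. $g(x,w^*) \le 0$.

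With this in hand, the hypothesis of Theorem \ref{th: solution sets inclusion} (ii) is satisfied, and that theorem directly gives that $(x^*,y^*)$ is a global solution of SBP \eqref{eq: bilevel}, which is what we want (and which in turn yields a global solution of OBP \eqref{ex: origoptim}).

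There is essentially no obstacle here beyond careful bookkeeping: the only delicate point is checking that every relevant $x$ (each carrying some feasible $y \in S(x)$) really belongs to $X \cap \text{dom}(U \cap K)$, since that is precisely the set on which the mapping is assumed fixed. Note that the $F$-level restriction appearing in Theorem \ref{th: solution sets inclusion} (ii) is not even exploited, because $g(x,w^*) \le 0$ already holds for every $x$ admitting some $y$ with $(x,y) \in W$; this is exactly the sense in which, as remarked before the statement, the requirements of Theorem \ref{th: solution sets inclusion} (ii) are ``trivially satisfied''.
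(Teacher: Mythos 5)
Your proof is correct and follows exactly the route the paper intends: the paper omits the proof precisely because, as it remarks just before the statement, the fixedness of $U \cap K$ makes the hypothesis of Theorem \ref{th: solution sets inclusion} (ii) hold automatically, which is what you verify (including the careful check that both $x^*$ and every relevant $x$ lie in $X \cap \text{dom}(U \cap K)$, where the mapping is assumed fixed).
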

\begin{remark}\label{rm:Stack}
The class of SBPs in which function $g$ does not depend on the upper variables $x$, in view of the previous corollary, can be solved by addressing GNEP \eqref{eq: gnep}, whenever at least an equilibrium exists. Note that Stackelberg games (see Section \ref{sec:backg}) belong to this category of problems.
\end{remark}
We point out that, as Example \ref{ex:BPGN} shows, also the implications in Corollary \ref{co: g vanish} can not be reversed: given a global solution $(x^*,y^*)$ of SBP \eqref{eq: bilevel}, $(x^*,y^*,y^*)$ may not be an equilibrium for GNEP \eqref{eq: gnep}, even when the lower level feasible set does not depend on upper level variables $x$.
On the other hand, this is not the case whenever, at the lower level, the solution set mapping $S$ is fixed. Indeed, for this class of problems, the implications in Theorem \ref{th: solution sets inclusion} (ii) can actually be reversed.
\begin{theorem}\label{th: solution set equiv}
Suppose that, at the lower level, the solution set mapping $S$ is fixed in $X \cap \text{dom}(U \cap K)$. The following implications hold:
\begin{enumerate}
\item[(i)]
if $(x^*, y^*, w^*)$ is an equilibrium of GNEP \eqref{eq: gnep}, then $(x^*, y^*)$ is a global solution of SBP \eqref{eq: bilevel};
\item[(ii)]
if $(x^*, y^*)$ is a global solution of SBP \eqref{eq: bilevel}, then, for all $w^* \in U$ such that $g(x^*, w^*) \le 0$ and $f(x^*, w^*) = f(x^*, y^*)$, $(x^*, y^*, w^*)$ is a solution of GNEP \eqref{eq: gnep}.
\end{enumerate}
\end{theorem}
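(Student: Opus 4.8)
The plan is to treat the two implications separately, in each case using the fixedness of $S$ to pass freely between $S(x^*)$ and $S(x)$ for the relevant values of $x$.

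For (i), I would reduce to Theorem \ref{th: solution sets inclusion}(ii). First I note that the follower's equilibrium conditions \eqref{eq: gnep solution feasib follower}-\eqref{eq: gnep solution optim follower} say exactly that $w^*$ solves the lower level problem \eqref{eq: follower} at $x^*$, i.e. $w^* \in S(x^*)$; by fixedness, $S(x^*) = S$, so $w^* \in S$. Then, for any $x$ admitting a $y$ with $(x,y) \in W$, one has $x \in X$ and $S(x) \neq \emptyset$, whence $x \in X \cap \text{dom}(U \cap K)$ and, again by fixedness, $S(x) = S \ni w^*$. In particular $w^*$ is feasible for the lower level problem at such $x$, giving $g(x, w^*) \le 0$. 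This verifies (and in fact strengthens, since no restriction to a level set of $F$ is needed) the hypothesis of Theorem \ref{th: solution sets inclusion}(ii), which then yields that $(x^*, y^*)$ is a global solution of SBP \eqref{eq: bilevel}.

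For (ii), I would directly check the four equilibrium relations \eqref{eq: gnep solution feasib leader}-\eqref{eq: gnep solution optim follower} at $(x^*, y^*, w^*)$. Three of them are immediate: the follower's feasibility \eqref{eq: gnep solution feasib follower} is part of the hypothesis; the leader's feasibility \eqref{eq: gnep solution feasib leader} follows from the SBP-feasibility \eqref{eq: bilevel solution feasib expl} of $(x^*, y^*)$ together with the equality $f(x^*, w^*) = f(x^*, y^*)$; and the follower's optimality \eqref{eq: gnep solution optim follower} holds because $y^* \in S(x^*)$ makes $f(x^*, y^*)$ the lower level optimal value at $x^*$, while $f(x^*, w^*) = f(x^*, y^*)$.

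The remaining, and main, step is the leader's optimality \eqref{eq: gnep solution optim leader}, namely $F(x^*, y^*) \le F(x,y)$ for all $(x,y) \in V(w^*)$. I expect the crux to be the inclusion $V(w^*) \subseteq W$. To establish it, I would take $(u, v) \in V(w^*)$, so that $u \in X$, $v \in U$, $g(u,v) \le 0$ and $f(u,v) \le f(u, w^*)$; from $v \in U \cap K(u)$ one gets $u \in \text{dom}(U \cap K)$, hence $u \in X \cap \text{dom}(U \cap K)$ and, by fixedness, $w^* \in S = S(u)$. Thus $f(u, w^*)$ equals the lower level optimal value at $u$, and the coupling constraint $f(u, v) \le f(u, w^*)$ upgrades the mere feasibility of $v$ to optimality, i.e. $v \in S(u)$ and $(u,v) \in W$. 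Since $(x^*, y^*)$ minimizes $F$ over $W$ by hypothesis and lies in $V(w^*)$ by the leader's feasibility just checked, the inclusion $V(w^*) \subseteq W$ delivers \eqref{eq: gnep solution optim leader}, completing the proof.
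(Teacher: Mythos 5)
Your proof is correct and follows essentially the same route as the paper's: in both arguments the crux is that, since $S$ is fixed and $w^*\in S(x^*)$, the quantity $f(x,w^*)$ coincides with the lower-level optimal value for every relevant $x$, so that $V(w^*)$ can be identified with $W$ (the paper states this as ``$W=V(w^*)$'' for both implications, while you package part (i) as an application of Theorem \ref{th: solution sets inclusion}(ii) and prove the inclusion $V(w^*)\subseteq W$ explicitly for part (ii)). Your write-up is simply a more detailed version of the paper's terse proof.
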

\begin{proof}
In view of relations \eqref{eq: bilevel solution feasib expl}, \eqref{eq: bilevel solution optim expl} and \eqref{eq: gnep solution feasib leader}-\eqref{eq: gnep solution optim follower}, in both cases, it suffices to show that, for every $x \in X \cap \text{dom}(U \cap K)$, $f(x,w^*) = \min_y\{f(x, y): \, y \in K(x) \cap U\}$ and, thus, $W = V(w^*)$.

(i) The claim follows easily observing that $w^* \in S(x^*)$.
 
(ii) Clearly, $y^* \in S(x^*)$ but, since $w^* \in U,$ $g(x^*, w^*) \le 0$ and $f(x^*,y^*) = f(x^*,w^*)$, we also have $w^* \in S(x^*).$  
\qed \end{proof}
\begin{remark}\label{rm: nollx}
Whenever in SBP \eqref{eq: bilevel}, at the lower level, the whole dependence on $x$ is dropped, the solution set mapping $S$ is obviously fixed. Thus, pure hierarchical program \eqref{eq: hier}
belonging to this category of problems, can equivalently be reformulated as the following simple GNEP in which the coupling between leader's and follower's problems occurs only at the leader's feasible set level:
\begin{equation}\label{eq: hier gnep}
\begin{array}{clccl}
\underset{x,y}{\mbox{minimize}} & \; F(x,y) & \hspace{70pt} & \underset{w}{\mbox{minimize}} & \; f(w)\\
\mbox{s.t.} & (x,y) \in X \times U &  & \mbox{s.t.} & w \in U\\[5pt]
 & f(y) \leq f(w) & & & g(w)\leq 0.\\[5pt]
 & g(y) \leq 0 & & &
\end{array}
\end{equation}
\end{remark}
Here we consider a particularly interesting example of SBP with a fixed lower level solution set mapping.
\begin{example}\label{ex:dempedutta}
(see \cite{dempe2012bilevel}) Let us consider the following SBP:
\begin{equation}\label{ex: dempedutta}
\begin{array}{cl}
\underset{x,y}{\mbox{minimize}} & (x-1)^2+y^2 \\
\mbox{s.t.} & y \in S(x),
\end{array}
\end{equation}
where $S(x)$ denotes the solution set of the lower level problem
$$
\begin{array}{cl}
\underset{w}{\mbox{minimize}} & x^2 \, w\\
& w^2 \leq 0.
\end{array}
$$
Note that the unique solution of \eqref{ex: dempedutta} is $\left(1,0\right)$.

Interestingly, as shown in \cite{dempe2012bilevel}, solving the MPCC reformulation of SBP \eqref{ex: dempedutta} invariably leads to point $(0,0),$ which is not the solution of the original problem. In this case, the MPCC reformulation fails to identify the set of solutions of the SBP, due to the lack of regularity (Slater's condition) in the lower level feasible set (see \cite{dempe2012bilevel}). Our GNEP, instead, in view of the previous result, effectively provides the unique solution of SBP \eqref{ex: dempedutta}.
Indeed, it is worth remarking that, in order to address SBP/OBP \eqref{eq: bilevel}/\eqref{ex: origoptim} by means of GNEP \eqref{eq: gnep}, we do not need any convexity or regularity preliminary assumption. \hfill \qed 
\end{example}

\subsection{Strong local solutions}\label{sec:locstr}

SBPs are inherently nonconvex (see \cite{dempe2007new}), so that multiple local optimal solutions may occur. We say that $(x^*,y^*)$ is a strong local solution of SBP \eqref{eq: bilevel} if $(x^*,y^*) \in W$ and there exists a neighborhood $N^* \in {\cal N}(x^*)$ of $x^*$ such that
\begin{equation}
F(x^*,y^*) \leq F(x,y) \; \forall \, (x,y) \in W \cap (N^* \times \mathbb R^{n_2}).
 \label{eq: bilevel local solution optim expl}
\end{equation}
Of course, global solutions are strong local solutions of SBP \eqref{eq: bilevel}. 
As the following example clearly shows, even if the lower level problem is linear and the upper level objective function is strongly convex, the resulting SBP may be nonconvex. Moreover, in this case, strong local solutions that are not global occur.
\begin{example}\label{ex:nonconvex}
Let us consider the following SBP:
\begin{equation}\label{ex: nonconvex bilev}
\begin{array}{cl}
\underset{x,y}{\mbox{minimize}} & x^2+y^2 \\
\mbox{s.t.} & -1 \leq x \leq 1 \\
& y \in S(x),
\end{array}
\end{equation}
where $S(x)$ denotes the solution set of the lower level problem
$$
\begin{array}{cl}
\underset{w}{\mbox{minimize}} & -w\\
& 2x+w \leq 2\\
& 0 \leq w \leq 1.
\end{array}
$$
Point $\left(\frac{4}{5},\frac{2}{5}\right)$ is the global solution of problem \eqref{ex: nonconvex bilev}, while $\left(0,1\right)$ is a strong local solution of SBP \eqref{ex: nonconvex bilev} that is not global. \hfill \qed
\end{example}
We point out that strong local solutions are obviously local solutions for SBP \eqref{eq: bilevel}. The converse, in general, is not true, see the following example.  
\begin{example}\label{ex:mordu}
(see \cite{dempe2012sensitivity}) Consider the following SBP
\begin{equation}\label{ex: nonconvex bilevmordu}
\begin{array}{cl}
\underset{x,y}{\mbox{minimize}} & x \\
\mbox{s.t.} & -1 \leq x \leq 1 \\
& y \in S(x),
\end{array}
\end{equation}
where $S(x)$ denotes the solution set of the lower level problem
$$
\begin{array}{cl}
\underset{w}{\mbox{minimize}} & xw\\
& 0 \leq w \leq 1.
\end{array}
$$
Point $(0,0)$ is a local solution that is not strong. Moreover, notice that the unique global minimum $(-1,1)$ is an ``easy'' global solution of SBP \eqref{ex: nonconvex bilevmordu} (see Proposition \ref{pr: easy points}). \hfill \qed 
\end{example}
Strong local solutions can be considered as ``asymmetric'' local solutions, since, in some sense, variables $x$ play there a more important role. Interestingly, any strong local solution of SBP \eqref{eq: bilevel}, which is precisely what we seek for, leads to a local solution of OBP \eqref{ex: origoptim}, unlike generic local solutions of SBP \eqref{eq: bilevel} (see \cite{dempe2012sensitivity}).
\begin{proposition}\label{th: strlocoptim}
Let $(x^*,y^*) \in W$ be a strong local solution of SBP \eqref{eq: bilevel}. Then $x^*$ is a local solution of OBP \eqref{ex: origoptim}.  
\end{proposition}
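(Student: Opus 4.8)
The plan is to reformulate the objective of OBP \eqref{ex: origoptim} as the value function $\psi(x) \triangleq \min_y\{F(x,y): \, y \in S(x)\}$ and to show that the strong local optimality of $(x^*,y^*)$ forces $x^*$ to minimize $\psi$ over $X$ on the very same neighborhood $N^*$ supplied by \eqref{eq: bilevel local solution optim expl}. The crucial observation is that a \emph{strong} local solution carries a neighborhood only in the $x$-variables, namely $N^* \times \mathbb{R}^{n_2}$, so the optimality inequality $F(x^*,y^*) \le F(x,y)$ holds for \emph{all} $y \in S(x)$ at once whenever $x \in X \cap N^*$; this uniformity in $y$ is precisely what the inner minimization in OBP demands.

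First I would specialize \eqref{eq: bilevel local solution optim expl} to $x = x^*$: since $(x^*,y) \in W$ for every $y \in S(x^*)$ and all such points lie in $N^* \times \mathbb{R}^{n_2}$, I obtain $F(x^*,y^*) \le F(x^*,y)$ for all $y \in S(x^*)$. Combined with $y^* \in S(x^*)$, which holds because $(x^*,y^*) \in W$, this yields $\psi(x^*) = F(x^*,y^*)$; in particular $x^* \in X \cap \text{dom}\,S$, so $x^*$ is feasible for OBP \eqref{ex: origoptim}. Thus the OBP objective at $x^*$ coincides with the SBP objective value at $(x^*,y^*)$.

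Next, for an arbitrary $x \in X \cap N^*$ with $x \in \text{dom}\,S$, every $y \in S(x)$ produces a point $(x,y) \in W \cap (N^* \times \mathbb{R}^{n_2})$, so \eqref{eq: bilevel local solution optim expl} delivers $F(x^*,y^*) \le F(x,y)$. Passing to the infimum over $y \in S(x)$ turns this into $F(x^*,y^*) \le \psi(x)$, and together with the previous step I get $\psi(x^*) \le \psi(x)$ on $X \cap N^*$, i.e. $x^*$ is a local solution of OBP \eqref{ex: origoptim}.

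The argument is short, and the only point that deserves care is that the neighborhood must not shrink in the $y$-direction: it is exactly the ``strong'' qualifier, i.e. the local property being uniform over all lower-level responses $y$, that legitimizes the implication $F(x^*,y^*) \le F(x,y) \ \forall y \in S(x) \Rightarrow F(x^*,y^*) \le \psi(x)$. A generic local solution of SBP, whose neighborhood also constrains $y$, would not license passing to the infimum over the whole set $S(x)$, which is why (as already illustrated by Example \ref{ex:mordu}) the conclusion can fail for ordinary local solutions. I therefore expect no serious obstacle beyond correctly setting up the value-function reformulation and noting that the inequality chain goes through even if the inner infimum is not attained.
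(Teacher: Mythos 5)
Your argument is correct and follows essentially the same route as the paper's proof: both pass to the value function $\min_y\{F(x,y):y\in S(x)\}$ and use the fact that the strong local optimality inequality holds uniformly over all $y\in S(x)$ for $x\in X\cap N^*$, so it survives the inner minimization. Your write-up merely spells out the two specializations (at $x=x^*$ and at arbitrary $x\in X\cap N^*$) that the paper compresses into a single line.
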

\begin{proof}
Since $(x^*,y^*) \in W$ and there exists a neighborhood $N^* \in {\cal N}(x^*)$ of $x^*$ such that $F(x^*,y^*) \leq F(x,y) \; \forall \, (x,y) \in W \cap (N^* \times \mathbb R^{n_2})$, we have $\min_y\{F(x^*, y): \, y \in S(x^*)\} = F(x^*,y^*) \le \min_y\{F(x, y): \, y \in S(x)\} \; \forall \, x \in X \cap N^*$. Hence, $x^*$ is a local solution of \eqref{ex: origoptim}. 
\qed \end{proof}
We are now in a position to restate Theorem \ref{th: solution sets inclusion} (ii) in a local sense. Preliminarily, let $I(x,y) \triangleq \left\{i \in \{1, \ldots, m\}: \; g_i(x,y) = 0\right\}$ be the active index set for constraints $g$ at $(x, y)$.
\begin{theorem}\label{th: local solution sets inclusion}
Let $(x^*, y^*, w^*)$ be an equilibrium of GNEP \eqref{eq: gnep}. If, for every $i \in I(x^*, w^*)$, there exists a neighborhood $N^* \in {\cal N}(x^*)$ of $x^*$ with $g_i(x,w^*)\leq 0$ for all $x \in N^*$ such that there exists $y$ with $(x,y) \in W$ and $F(x,y) \leq F(x^*,y^*)$, then $(x^*, y^*)$ is a strong local solution of SBP \eqref{eq: bilevel}.
\end{theorem}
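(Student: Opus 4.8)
The plan is to localize the argument already used for Theorem \ref{th: solution sets inclusion} (ii). By part (i) of that theorem we already know $(x^*, y^*) \in W$, so it only remains to establish the local optimality condition \eqref{eq: bilevel local solution optim expl}: to exhibit a single neighborhood $N^* \in {\cal N}(x^*)$ such that $F(x^*, y^*) \le F(x,y)$ for every $(x,y) \in W \cap (N^* \times \mathbb R^{n_2})$. As before I would introduce the level set ${\cal L}^*$ of $F$ at $(x^*, y^*)$ from \eqref{eq:L} and aim to prove the restricted inclusion
\[
W \cap {\cal L}^* \cap (N^* \times \mathbb R^{n_2}) \subseteq V(w^*),
\]
after which \eqref{eq: gnep solution optim leader} immediately yields $F(x^*, y^*) \le F(x,y)$ on $W \cap {\cal L}^* \cap (N^* \times \mathbb R^{n_2})$, while on $W \cap ({\cal L}^*)^c$ the inequality $F(x,y) > F(x^*, y^*)$ holds trivially by definition of $({\cal L}^*)^c$.

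The decisive point is to upgrade the index-wise hypothesis on $g$ into the single statement: there exists $N^* \in {\cal N}(x^*)$ with $g(x, w^*) \le 0$ for all $x \in N^*$ for which some $y$ satisfies $(x,y) \in W$ and $F(x,y) \le F(x^*, y^*)$. To get this I would split the constraint indices into the active set $I(x^*, w^*)$ and its complement. For an inactive index $i \notin I(x^*, w^*)$ we have $g_i(x^*, w^*) < 0$, so by continuity of $g_i(\cdot, w^*)$ there is a neighborhood on which $g_i(x, w^*) \le 0$ holds unconditionally, with no hypothesis needed. For an active index $i \in I(x^*, w^*)$ the hypothesis of the theorem directly supplies a neighborhood on which $g_i(x, w^*) \le 0$ holds for the relevant $x$. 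Since there are only finitely many constraints, intersecting these finitely many neighborhoods produces the desired common $N^*$.

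With $N^*$ in hand the local inclusion follows exactly as in the global case: taking any $(\bar x, \bar y) \in W \cap {\cal L}^* \cap (N^* \times \mathbb R^{n_2})$, the construction of $N^*$ gives $g(\bar x, w^*) \le 0$, hence $w^* \in U \cap K(\bar x)$; combined with $(\bar x, \bar y) \in W$, which in particular forces $f(\bar x, \bar y) \le f(\bar x, w^*)$, this places $(\bar x, \bar y)$ in $T \cap H(w^*) = V(w^*)$. Applying \eqref{eq: gnep solution optim leader} then gives the sought local optimality, so $(x^*, y^*)$ satisfies \eqref{eq: bilevel local solution optim expl} and is a strong local solution. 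The main obstacle I anticipate is purely bookkeeping: correctly quantifying the hypothesis so that a single neighborhood serves all active indices simultaneously, and verifying that the continuity argument legitimately disposes of the inactive indices without any extra assumption. Indeed, it is precisely this active/inactive dichotomy that makes the localized hypothesis, restricted to $i \in I(x^*, w^*)$, sufficient.
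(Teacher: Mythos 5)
Your proposal is correct and follows essentially the same route as the paper's proof: the same restricted inclusion $W \cap (N^* \times \mathbb{R}^{n_2}) \cap {\cal L}^* \subseteq V(w^*)$, the same active/inactive index split with continuity handling the inactive constraints and the hypothesis handling the active ones, and the same final appeal to \eqref{eq: gnep solution optim leader}. The only difference is cosmetic: you spell out the finite intersection of neighborhoods that the paper compresses into a ``without loss of generality''.
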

\begin{proof}
Since $(x^*, y^*, w^*)$ is an equilibrium of GNEP \eqref{eq: gnep}, it satisfies relations \eqref{eq: gnep solution feasib leader}-\eqref{eq: gnep solution optim follower}. Our aim is to show that \eqref{eq: bilevel solution feasib expl} and \eqref{eq: bilevel local solution optim expl} hold at $(x^*, y^*)$.
 
As done in the proof of Theorem \ref{th: solution sets inclusion}, we observe that \eqref{eq: gnep solution feasib leader}, \eqref{eq: gnep solution feasib follower} and \eqref{eq: gnep solution optim follower} together imply that $(x^*, y^*)$ satisfies \eqref{eq: bilevel solution feasib expl}: thus, $(x^*,y^*)$ is feasible for SBP \eqref{eq: bilevel}.

We recall that, by \eqref{eq: gnep solution feasib follower}, we have $g(x^*, w^*) \le 0$; let, without loss of generality, $N^*$ be such that $g_j(x, w^*) \le 0$ for all $x \in N^*$ and for every $j \notin I(x^*, w^*)$. 

For any couple $(\bar x, \bar y)$ in $W \cap (N^* \times \Re^{n_2}) \cap {\cal L}^*$ (for the definition of sets ${\cal L}^*$ and $({\cal L}^*)^c$, see \eqref{eq:L} and \eqref{eq:Lc}) we have, by assumptions, $g_i(\bar x,w^*)\leq 0$ for every $i \in I(x^*,w^*)$. Therefore, since we have also $g_j(\bar x, w^*) \le 0$ for every $j \notin I(x^*,w^*)$, in view of \eqref{eq: gnep solution feasib follower}, we get $w^* \in U \cap K(\bar x)$.
Inclusions $(\bar x, \bar y) \in W$ and $w^* \in U \cap K(\bar x)$ entail $(\bar x, \bar y) \in V(w^*)$ and, in turn,
\begin{equation}\label{eq:fininc}
W \cap (N^* \times \Re^{n_2}) \cap {\cal L}^* \subseteq V(w^*).
\end{equation}
Thanks to \eqref{eq: gnep solution optim leader} and \eqref{eq:fininc}, and noting that for every $(x,y) \in W \cap (N^* \times \Re^{n_2}) \cap ({\cal L}^*)^c$ we have $F(x,y) > F(x^*,y^*)$, \eqref{eq: bilevel local solution optim expl} holds at $(x^*, y^*)$. Hence, $(x^*, y^*)$ is a strong local solution of SBP \eqref{eq: bilevel}. 
\qed \end{proof}
We remark that, while Example \ref{ex:BPGN} shows that a strong (local) solution of SBP may not lead to an equilibrium of the corresponding GNEP, if conditions in Theorem \ref{th: local solution sets inclusion} are satisfied, an equilibrium of GNEP \eqref{eq: gnep} always provides us with a strong (local) solution of SBP \eqref{eq: bilevel}.

\begin{example}\label{ex:nonconvex2}
Let us consider again the problem in Example \ref{ex:nonconvex}; the corresponding GNEP is the following:
\begin{equation}\label{ex: nonconvex gnep}
\begin{array}{clccl}
\underset{x,y}{\mbox{minimize}} & \; x^2+y^2 & \hspace{50pt} & \underset{w}{\mbox{minimize}} & \; -w\\
\mbox{s.t.} & -1 \leq x \leq 1 &  & & 2x+w \leq 2 \\
& y \geq w && & 0 \leq w \leq 1. \\
& 2x+y \leq 2 &&& \\
& 0 \leq y \leq 1 &&&
\end{array}
\end{equation}
Point $\left(0,1,1\right)$ is an equilibrium of the convex GNEP \eqref{ex: nonconvex gnep}. Moreover, it trivially satisfies assumptions of Theorem \ref{th: local solution sets inclusion}, since constraint $2x+w \leq 2$ is not active at $\left(0,1\right)$. \hfill \qed
\end{example}

\section{Applications in economics}\label{sec: application}

Let us consider a market with two firms, each acting as a player. Firm 1 produces quantities $q^1 \in \Re^{n_1}$ of some goods, while firm 2 produces quantities $q^2 \in \Re^{n_2}$ of other goods. Given private technological constraints $X^\nu$ on the production level, each player $\nu = 1, 2$ sets $q^\nu$ in order to maximize its own profit
$$
\Pi_\nu (q^1,q^2) := p^\nu(q^1,q^2)\trt q^\nu - c_\nu(q^1,q^2),
$$
where $p^\nu$ and $c_\nu$ are inverse demand and cost functions, respectively.
We assume sets $X^1$ and $X^2$ to be convex, compact and nonempty, and functions $\Pi_1$ and $\Pi_2$ to be continuously differentiable and concave with respect to $(q^1,q^2)$ and $q^2$, respectively.
In this setting, two different classical perspectives can be considered.
\begin{description}
 \item[Horizontal model:] both players decide their strategies $q^\nu$ simultaneously; we assume that the players act rationally and have complete information, and there is no explicit collusion; we model this case as a ``standard'' GNEP;
 
 \item[Vertical model:] player 1 can anticipate player 2 by setting its variables $q^1$ for first; we model this case as an SBP.
\end{description}
We illustrate that, in order to model this system, one can also rely on our new GNEP \eqref{eq: gnep}, which in some sense lies in between the horizontal and the vertical models. We call our GNEP {\bf uneven horizontal model}.

In the following subsections, considering different instances of the described framework, we highlight the connections between the three models.

\subsection{$\Pi_2$ not depending on $q^1$}\label{subsec: application hierarchical}

Assume that $\Pi_2$ does not depend on $q^1$.
From an horizontal point of view, the system can be modeled by resorting to the following ``standard'' GNEP:
\begin{equation*}
\begin{array}{clccl}
\underset{q^1}{\mbox{maximize}} & \; \Pi_1(q^1,q^2) & \hspace{70pt} & \underset{q^2}{\mbox{maximize}} & \; \Pi_2(q^2)\\
\mbox{s.t.} & q^1 \in X^1 &  & \mbox{s.t.} & q^2 \in X^2.
\end{array}
\end{equation*}
In a vertical framework, one can rely to the classical (hierarchical) optimization problem
\begin{equation*}
\begin{array}{cl}
\underset{q^1, q^2}{\mbox{maximize}} & \Pi_1(q^1,q^2) \\
\mbox{s.t.} & q^1 \in X^1 \\[5pt]
& q^2 \in S,
\end{array}
\end{equation*}
where $S$ denotes the solution set of the lower level problem
$$
\begin{array}{cl}
\underset{w^2}{\mbox{maximize}} & \Pi_2(w^2)\\
\mbox{s.t.} & w^2 \in X^2.
\end{array}
$$
Finally, a new intermediate perspective can be given by the uneven horizontal GNEP model  
\begin{equation*}
\begin{array}{clccl}
\underset{q^1,q^2}{\mbox{maximize}} & \; \Pi_1(q^1,q^2) & \hspace{70pt} & \underset{w^2}{\mbox{maximize}} & \; \Pi_2(w^2)\\
\mbox{s.t.} & q^1 \in X^1, \quad q^2 \in X^2 &  & \mbox{s.t.} & w^2 \in X^2.\\[5pt]
& \Pi_2(q^2) \geq \Pi_2(w^2) & & & 
\end{array}
\end{equation*}

\noindent
Let us introduce the following sets of values:
\begin{description}
 \item[$\Pi_1^{Horizontal}$] is the range of values of $\Pi_1$ with respect to the solution set of the horizontal model: given an equilibrium $(\widetilde q^1, \widetilde q^2)$ of the horizontal model, we have $\Pi_1(\widetilde q^1, \widetilde q^2) \in \Pi_1^{Horizontal}$;
 \item[$\Pi_1^{Uneven}$] is the range of values of $\Pi_1$ with respect to the solution set of the uneven horizontal model:
 given an equilibrium $(\bar q^1, \bar q^2, \bar w^2)$ of the uneven horizontal model, we have $\Pi_1(\bar q^1, \bar q^2) \in \Pi_1^{Uneven}$;
 \item[$\Pi_1^{Vertical}$] is the optimal value of the vertical model.
\end{description}

\noindent
By assumptions, $\Pi_1^{Horizontal}$ (see \cite{FacchPangBk}) is compact and nonempty, while $\Pi_1^{Uneven}$ and $\Pi_1^{Vertical}$ are singletons. Then, the connections between the three modelistic perspectives can be expressed by the following straightforward relations (see also Theorem \ref{th: solution set equiv}):
$$
\max \{ \Pi_1^{Horizontal} \} = \Pi_1^{Uneven} = \Pi_1^{Vertical}.
$$
\begin{remark}\label{rm:algohier}
It should be remarked that $\Pi_1^{Uneven}$ can be computed by simply finding the optimal value $\Pi_2^*$ of the follower's problem and then addressing the optimization problem
\begin{equation*}
\begin{array}{clccl}
\underset{q^1,q^2}{\mbox{maximize}} & \; \Pi_1(q^1,q^2) & & &\\
\mbox{s.t.} & q^1 \in X^1, \quad q^2 \in X^2 &  &  &\\[5pt]
& \Pi_2(q^2) \geq \Pi_2^*. & & & 
\end{array}
\end{equation*}
\end{remark}

\subsection{$\Pi_2$ not depending on $q^1$ and players sharing a budget constraint}\label{subsec: application general 1}

In the same setting of subsection \ref{subsec: application hierarchical}, let players also share a common resource. Thus, for every player, we consider the additional budget constraint $a_1(q^1) + a_2(q^2) \leq b$, where convex function $a_\nu$ ($\nu = 1, 2$) indicates the resource consumption to produce quantities $q^\nu$ and scalar $b>0$ is the amount of resource available in the market.
We assume set $\{q^1 \in X^1, \, q^2 \in X^2 \, : \, a_1(q^1) + a_2(q^2) \leq b \}$ to be nonempty.
In an horizontal framework we have:
\begin{equation*}
\begin{array}{clccl}
\underset{q^1}{\mbox{maximize}} & \; \Pi_1(q^1,q^2) & \hspace{70pt} & \underset{q^2}{\mbox{maximize}} & \; \Pi_2(q^2)\\
\mbox{s.t.} & q^1 \in X^1 &  & \mbox{s.t.} & q^2 \in X^2\\[5pt]
& a_1(q^1) + a_2(q^2) \leq b & & & a_1(q^1) + a_2(q^2) \leq b,
\end{array}
\end{equation*}
as for the vertical model we get:
\begin{equation*}
\begin{array}{cl}
\underset{q^1, q^2}{\mbox{maximize}} & \Pi_1(q^1,q^2) \\
\mbox{s.t.} & q^1 \in X^1 \\[5pt]
& q^2 \in S(q^1),
\end{array}
\end{equation*}
where $S(q^1)$ denotes the solution set of the lower level problem
$$
\begin{array}{cl}
\underset{w^2}{\mbox{maximize}} & \Pi_2(w^2)\\
\mbox{s.t.} & w^2 \in X^2 \\[5pt]
& a_1(q^1) + a_2(w^2) \leq b.
\end{array}
$$
In the uneven horizontal vision, we have:
\begin{equation*}
\begin{array}{clccl}
\underset{q^1,q^2}{\mbox{maximize}} & \; \Pi_1(q^1,q^2) & \hspace{70pt} & \underset{w^2}{\mbox{maximize}} & \; \Pi_2(w^2)\\
\mbox{s.t.} & q^1 \in X^1, \quad q^2 \in X^2 &  & \mbox{s.t.} & w^2 \in X^2 \\[5pt]
& a_1(q^1) + a_2(q^2) \leq b & & & a_1(q^1) + a_2(w^2) \leq b. \\[5pt]
& \Pi_2(q^2) \geq \Pi_2(w^2) & & &
\end{array}
\end{equation*}

\noindent
In order to point out the relations between the models, in this case it is useful to resort to the resource-directed parameterization introduced (for jointly convex GNEPs) in \cite{NabTsengFuk09} and in \cite{facchinei2011computation}.
Let $b_1 \in B \triangleq \big\{b_1 \in \Re: 0 \leq b_1 \leq b, \, \{q^1 \in X^1 \, : \, a_1(q^1) \leq b_1 \} \neq \emptyset$ and $\{q^2 \in X^2 \, : \, a_2(q^2) \leq b-b_1 \} \neq \emptyset \big\}$ be the amount of resource given to player 1; on the other hand, $b - b_1 \geq 0$ turns out to be the amount of resource available to player 2. We get the following parameterized version of the horizontal model:
\begin{equation*}
\begin{array}{clccl}
\underset{q^1}{\mbox{maximize}} & \; \Pi_1(q^1,q^2) & \hspace{70pt} & \underset{q^2}{\mbox{maximize}} & \; \Pi_2(q^2)\\
\mbox{s.t.} & q^1 \in X^1 &  & \mbox{s.t.} & q^2 \in X^2\\[5pt]
& a_1(q^1) \leq b_1 & & & a_2(q^2) \leq b - b_1.
\end{array}
\end{equation*}
As parameterized vertical model we have
\begin{equation*}
\begin{array}{cl}
\underset{q^1, q^2}{\mbox{maximize}} & \Pi_1(q^1,q^2) \\
\mbox{s.t.} & q^1 \in X^1 \\[5pt]
& a_1(q^1) \leq b_1 \\[5pt]
& q^2 \in S_{b_1},
\end{array}
\end{equation*}
where $S_{b_1}$ denotes the solution set of the lower level problem
$$
\begin{array}{cl}
\underset{w^2}{\mbox{maximize}} & \Pi_2(w^2)\\
\mbox{s.t.} & w^2 \in X^2 \\[5pt]
& a_2(w^2) \leq b-b_1.
\end{array}
$$
And the corresponding parameterized uneven horizontal version is
\begin{equation*}
\begin{array}{clccl}
\underset{q^1,q^2}{\mbox{maximize}} & \; \Pi_1(q^1,q^2) & \hspace{70pt} & \underset{w^2}{\mbox{maximize}} & \; \Pi_2(w^2)\\
\mbox{s.t.} & q^1 \in X^1, \quad q^2 \in X^2 &  & \mbox{s.t.} & w^2 \in X^2 \\[5pt]
& a_1(q^1) \leq b_1 & & & a_2(w^2) \leq b - b_1. \\[5pt]
& a_2(q^2) \leq b - b_1 & & & \\[5pt]
& \Pi_2(q^2) \geq \Pi_2(w^2) & & &
\end{array}
\end{equation*}

\noindent
As done for sets $\Pi_1^{Horizontal}$, $\Pi_1^{Uneven}$ and $\Pi_1^{Vertical}$ (see subsection \ref{subsec: application hierarchical}), let us define the following sets of values:
\begin{description}
 \item[$\Pi_1^{Horizontal}(b_1)$] is the range of values of $\Pi_1$ with respect to the solution set of the parameterized horizontal model:
 given an equilibrium $(\widetilde q^1, \widetilde q^2)$ of the parameterized horizontal model with $b_1 \in B$, we have $\Pi_1(\widetilde q^1, \widetilde q^2) \in \Pi_1^{Horizontal}(b_1)$;
 \item[$\Pi_1^{Uneven}(b_1)$] is the range of values of $\Pi_1$ with respect to the solution set of the parameterized uneven horizontal model:  given an equilibrium $(\bar q^1, \bar q^2, \bar w^2)$ of the parameterized uneven horizontal model with $b_1 \in B$, we have $\Pi_1(\bar q^1, \bar q^2) \in \Pi_1^{Uneven}(b_1)$;
 \item[$\Pi_1^{Vertical}(b_1)$] is the optimal value of the parameterized vertical model with $b_1 \in B$.
\end{description}
Similarly to what observed in subsection \ref{subsec: application hierarchical}, by assumptions,
$\Pi_1^{Horizontal}(b_1)$ is compact and nonempty, while $\Pi_1^{Uneven}(b_1)$ and $\Pi_1^{Vertical}(b_1)$ are singletons for every $b_1 \in B$.
In this case $\Pi_1^{Horizontal}$ is nonempty since at least a variational equilibrium exists, see \cite{FacchKanSu}. As for $\Pi_1^{Uneven}$, let us assume that an equilibrium of the uneven horizontal model exists, thus making $\Pi_1^{Uneven}$ nonempty. We observe that, by relying for example on Ichiishi's theorem, the latter assumption holds under mild conditions, see, again, \cite{FacchKanSu} (and also Remark \ref{rm:algofnond}); we do not go into details, since this aspect is immaterial to our analysis. Finally, as in the previous case, $\Pi_1^{Vertical}$ is a singleton. 

\noindent
For all $b_1 \in B$, we have
\begin{equation}\label{eq:rel1}
\max \{ \Pi_1^{Horizontal}(b_1) \} = \Pi_1^{Uneven}(b_1) = \Pi_1^{Vertical}(b_1).
\end{equation}
Furthermore, by Theorem \ref{th: solution sets inclusion}, we get
\begin{equation}\label{eq:relpar}
\sup \{ \Pi_1^{Horizontal} \} \leq \sup \{ \Pi_1^{Uneven} \} \leq \Pi_1^{Vertical}.
\end{equation}
Interestingly, relations \eqref{eq:rel1} and \eqref{eq:relpar} can be linked to each other according to the following Propositions \ref{pr: SBP parameterized} and \ref{pr: SBP parameterized 2}.

\begin{proposition}\label{pr: SBP parameterized}
 $$
 \displaystyle \bigcup_{b_1 \in B} \Pi_1^{Vertical}(b_1) \ni \Pi_1^{Vertical}.
 $$
\end{proposition}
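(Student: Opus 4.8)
The plan is to exhibit $\Pi_1^{Vertical}$ as an intermediate value of the scalar map $b_1 \mapsto \Pi_1^{Vertical}(b_1)$ over the interval $B$ and to close the argument with the intermediate value theorem. The key structural remark is that the follower's problem of the vertical model depends on $q^1$ only through the residual budget $b - a_1(q^1)$, so that $S(q^1) = S_{a_1(q^1)}$ for every $q^1 \in X^1$; equivalently, $(q^1,q^2)$ is feasible for the vertical model iff $a_1(q^1) + a_2(q^2) \le b$ and $q^2 \in S_{a_1(q^1)}$. Note in passing that, for each fixed $b_1$, the lower-level solution mapping $S_{b_1}$ of the parameterized vertical model is constant, so every parameterized problem falls in the fixed-$S$ regime of Theorem \ref{th: solution set equiv}.

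First I would pick a global solution $(\bar q^1, \bar q^2)$ of the vertical model, so that $\Pi_1^{Vertical} = \Pi_1(\bar q^1,\bar q^2)$ (existence following from compactness of $X^1\times X^2$ together with continuity and concavity of the data, which make the relevant value function continuous and the vertical feasible set compact), and set $\hat b_1 := a_1(\bar q^1)$. Since $a_2(\bar q^2)\ge 0$ and $a_1(\bar q^1)+a_2(\bar q^2)\le b$, one checks $\hat b_1\in B$, and because $\bar q^2\in S(\bar q^1)=S_{\hat b_1}$ with $a_1(\bar q^1)=\hat b_1$, the pair $(\bar q^1,\bar q^2)$ is feasible for the parameterized vertical model at $\hat b_1$; hence $\Pi_1^{Vertical}(\hat b_1)\ge \Pi_1^{Vertical}$. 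For the opposite endpoint I would take $b_1^\ell := \min_{q^1\in X^1} a_1(q^1)$, the left end of $B$ (it lies in $B$ because $\{q^1\in X^1,\ q^2\in X^2 : a_1(q^1)+a_2(q^2)\le b\}\ne\emptyset$, and clearly $b_1^\ell\le \hat b_1$). At $b_1^\ell$ any admissible $q^1$ is forced to satisfy $a_1(q^1)=b_1^\ell$, whence $S_{b_1^\ell}=S(q^1)$; thus every point feasible for the parameterized model at $b_1^\ell$ is vertical-feasible, and maximizing $\Pi_1$ yields $\Pi_1^{Vertical}(b_1^\ell)\le \Pi_1^{Vertical}$.

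Combining the two bounds gives $\Pi_1^{Vertical}(b_1^\ell)\le \Pi_1^{Vertical}\le \Pi_1^{Vertical}(\hat b_1)$ with $[b_1^\ell,\hat b_1]\subseteq B$, the latter being a compact interval and hence connected. Provided $b_1\mapsto \Pi_1^{Vertical}(b_1)$ is continuous on $[b_1^\ell,\hat b_1]$, the intermediate value theorem produces $b_1^*\in[b_1^\ell,\hat b_1]$ with $\Pi_1^{Vertical}(b_1^*)=\Pi_1^{Vertical}$, that is, $\Pi_1^{Vertical}\in\bigcup_{b_1\in B}\Pi_1^{Vertical}(b_1)$, as claimed.

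The hard part is exactly this continuity, and it is the only place where the convexity/concavity hypotheses are really needed. Setting $\varphi_2(b_1):=\max\{\Pi_2(w^2): w^2\in X^2,\ a_2(w^2)\le b-b_1\}$, convexity of $a_1,a_2$ furnishes Slater points in the interior of $B$, so the two feasible-set correspondences $b_1\rightrightarrows\{q^1\in X^1: a_1(q^1)\le b_1\}$ and $b_1\rightrightarrows\{w^2\in X^2: a_2(w^2)\le b-b_1\}$ are continuous; Berge's maximum theorem then makes $\varphi_2$ continuous and the follower's solution mapping $S_{b_1}=\{w^2\in X^2: a_2(w^2)\le b-b_1,\ \Pi_2(w^2)\ge\varphi_2(b_1)\}$ upper semicontinuous, which already delivers upper semicontinuity of $\Pi_1^{Vertical}(\cdot)$ after a second application to the leader's maximization. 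The genuinely delicate issue is lower semicontinuity, since an argmax mapping such as $S_{b_1}$ need not be lower semicontinuous when $\Pi_2$ is only concave; this is automatic when the follower's best response is unique (for instance, $\Pi_2$ strictly concave) and otherwise must be extracted from the convex structure of the lower level. That some such analytic argument is unavoidable is confirmed by the observation that the witnessing $b_1^*$ is in general neither $a_1(\bar q^1)$ nor an endpoint of $B$, and the parameterized optimizer attached to $b_1^*$ need not be vertical-feasible at all---only the two optimal values coincide.
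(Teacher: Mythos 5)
Your argument takes a genuinely different, and considerably longer, route than the paper's. The paper's proof is three lines: take a solution $(\widehat q^1,\widehat q^2)$ of the vertical model, set $\widehat b_1\triangleq a_1(\widehat q^1)\in B$, note $S(\widehat q^1)=S_{\widehat b_1}$, and assert that $(\widehat q^1,\widehat q^2)$ is \emph{optimal} for the parameterized vertical model at $\widehat b_1$, so that this single parameter value already gives $\Pi_1^{Vertical}(\widehat b_1)=\Pi_1^{Vertical}$ and hence the membership. Your first step recovers only the feasibility half of this and concludes $\Pi_1^{Vertical}(\widehat b_1)\ge\Pi_1^{Vertical}$; your reason for not claiming equality --- a pair $(q^1,q^2)$ with $a_1(q^1)<\widehat b_1$ and $q^2\in S_{\widehat b_1}$ need not satisfy $q^2\in S_{a_1(q^1)}=S(q^1)$, hence need not be vertical-feasible --- points at a real phenomenon (one can write down instances with linear $\Pi_2$ and concave quadratic $\Pi_1$ in which the strict inequality $\Pi_1^{Vertical}(\widehat b_1)>\Pi_1^{Vertical}$ occurs), so your caution is not misplaced. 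The cost is that you then need the lower bound at $b_1^\ell$ and an intermediate value argument to recover membership.

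And that is exactly where the proposal breaks down: the whole construction rests on continuity of $b_1\mapsto\Pi_1^{Vertical}(b_1)$ on $[b_1^\ell,\widehat b_1]$, which you state as a proviso and never establish. This is not a routine verification. The feasible set of the parameterized vertical model contains the constraint $q^2\in S_{b_1}$, and $S_{b_1}$ is an argmax correspondence; for merely concave $\Pi_2$ it is upper semicontinuous but in general not lower semicontinuous in $b_1$, so $\Pi_1^{Vertical}(\cdot)$ is in general only upper semicontinuous --- optimistic bilevel value functions are the standard example of this failure --- and upper semicontinuity alone does not support the intermediate value theorem. Your own closing sentences concede this. As written, therefore, the proof delivers only the two-sided bound $\Pi_1^{Vertical}(b_1^\ell)\le\Pi_1^{Vertical}\le\Pi_1^{Vertical}(\widehat b_1)$, not the claimed membership; to complete it you must either prove lower semicontinuity of $\Pi_1^{Vertical}(\cdot)$ in this convex setting (e.g.\ by showing $S_{b_1}$ is a continuous correspondence, which your own discussion notes may fail without strict concavity of $\Pi_2$), or abandon the sandwich and argue directly, as the paper does, that some specific $b_1$ attains the value $\Pi_1^{Vertical}$ exactly. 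A minor further point: $b_1^\ell,\widehat b_1\in B$ tacitly uses $a_1,a_2\ge 0$, which is implicit in the model but worth stating.
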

\begin{proof}
 Let $(\widehat q^1, \widehat q^2)$ be a solution of the vertical model. With $\widehat b_1 \triangleq a_1(\widehat q^1) \in B,$ we have $S(\widehat q^1) = S_{\widehat b_1}$. Then, in turn, since $(\widehat q^1, \widehat q^2)$ is optimal for the parameterized vertical model with $b_1 = \widehat b_1$, the thesis follows.
\end{proof}
In view of the previous result and since $\Pi_1^{Uneven}(b_1) = \Pi_1^{Vertical}(b_1)$, we also have \begin{equation}\label{eq: application 2 inclusion}
 \displaystyle \bigcup_{b_1 \in B} \Pi_1^{Uneven}(b_1) \ni \Pi_1^{Vertical}.
 \end{equation}
\begin{proposition}\label{pr: SBP parameterized 2}
 If, for every solution $(\widehat q^1, \widehat q^2)$ of the parameterized horizontal model for $b_1 = \widehat b_1 \in B$, $a_1(\widehat q^1) = \widehat b_1$ and $a_2(\widehat q^2) = b-\widehat b_1$, then
 $$
 \displaystyle \sup_{b_1 \in B} \Pi_1^{Uneven}(b_1) = \Pi_1^{Vertical}.
 $$
\end{proposition}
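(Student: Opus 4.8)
The plan is to establish the equality by proving the two inequalities
$$\sup_{b_1 \in B} \Pi_1^{Uneven}(b_1) \geq \Pi_1^{Vertical} \qquad\text{and}\qquad \sup_{b_1 \in B} \Pi_1^{Uneven}(b_1) \leq \Pi_1^{Vertical}$$
separately. The first one is immediate and uses none of the extra hypothesis: relation \eqref{eq: application 2 inclusion} gives $\Pi_1^{Vertical} \in \bigcup_{b_1 \in B} \Pi_1^{Uneven}(b_1)$, so there is some $b_1^\ast \in B$ with $\Pi_1^{Uneven}(b_1^\ast) = \Pi_1^{Vertical}$ (recall each $\Pi_1^{Uneven}(b_1)$ is a singleton), whence the supremum is at least $\Pi_1^{Vertical}$. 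The content of the proposition therefore lies entirely in the reverse inequality, and this is precisely where the assumption on the horizontal solutions will be needed.

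For the reverse inequality I would first use \eqref{eq:rel1} to replace $\Pi_1^{Uneven}(b_1)$ by $\max\{\Pi_1^{Horizontal}(b_1)\} = \Pi_1^{Vertical}(b_1)$, reducing the claim to showing $\Pi_1^{Vertical}(b_1) \leq \Pi_1^{Vertical}$ for every $b_1 \in B$. Fix such a $b_1$. Since $\Pi_1^{Horizontal}(b_1)$ is compact and nonempty, there is an equilibrium $(\widetilde q^1,\widetilde q^2)$ of the parameterized horizontal model attaining $\max\{\Pi_1^{Horizontal}(b_1)\}$, so that $\Pi_1(\widetilde q^1,\widetilde q^2) = \Pi_1^{Vertical}(b_1)$. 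Invoking the hypothesis of the proposition for this solution yields $a_1(\widetilde q^1) = b_1$ and $a_2(\widetilde q^2) = b - b_1$.

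The key step is then to verify that $(\widetilde q^1,\widetilde q^2)$ is feasible for the (non-parameterized) vertical model, i.e. that $\widetilde q^2 \in S(\widetilde q^1)$. The follower's problem attached to $\widetilde q^1$ in that model maximizes $\Pi_2(w^2)$ over $w^2 \in X^2$ with $a_1(\widetilde q^1) + a_2(w^2) \leq b$, that is, over $a_2(w^2) \leq b - a_1(\widetilde q^1) = b - b_1$; but this is exactly the feasible set defining $S_{b_1}$, and $\widetilde q^2$ is a best response of player $2$ in the parameterized horizontal model, hence $\widetilde q^2 \in S_{b_1} = S(\widetilde q^1)$. I expect this set-matching to be the only delicate point of the argument: it is exactly the identity $a_1(\widetilde q^1) = b_1$ supplied by the hypothesis that makes the two follower feasible sets coincide, for otherwise the vertical follower would face a strictly larger budget $b - a_1(\widetilde q^1) > b - b_1$ and $\widetilde q^2$ could fail to remain a lower-level optimizer. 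Granted feasibility, one obtains $\Pi_1^{Vertical}(b_1) = \Pi_1(\widetilde q^1,\widetilde q^2) \leq \Pi_1^{Vertical}$.

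Taking the supremum over $b_1 \in B$ and using \eqref{eq:rel1} once more gives $\sup_{b_1 \in B} \Pi_1^{Uneven}(b_1) \leq \Pi_1^{Vertical}$, which combined with the first inequality yields the asserted equality. Everything apart from the feasibility transfer is routine bookkeeping with \eqref{eq:rel1} and \eqref{eq: application 2 inclusion}; I note in passing that the complementary condition $a_2(\widetilde q^2) = b - b_1$, while part of the stated hypothesis, serves only to confirm that no resource is wasted and is not strictly required beyond the budget identity $a_1(\widetilde q^1) = b_1$ used above.
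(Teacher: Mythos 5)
Your argument is correct, but it takes a genuinely different route from the paper's for the nontrivial inequality $\sup_{b_1}\Pi_1^{Uneven}(b_1)\le\Pi_1^{Vertical}$. The paper stays entirely at the level of value sets: it invokes \cite[Theorem 3.6]{NabTsengFuk09} to get $\bigcup_{b_1\in B}\Pi_1^{Horizontal}(b_1)=\Pi_1^{Horizontal}$ (the full-consumption hypothesis being the standard sufficient condition for solutions of the parameterized games to be solutions of the original shared-constraint GNEP), deduces $\sup_{b_1}\max\{\Pi_1^{Horizontal}(b_1)\}\le\sup\{\Pi_1^{Horizontal}\}$, and then closes via \eqref{eq:rel1} and \eqref{eq:relpar}; the reverse inequality comes from \eqref{eq: application 2 inclusion}, exactly as in your first step. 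You instead work pointwise: you take a max-value equilibrium $(\widetilde q^1,\widetilde q^2)$ of the parameterized horizontal model, use $a_1(\widetilde q^1)=b_1$ to identify the follower's feasible set in $S(\widetilde q^1)$ with that of $S_{b_1}$, conclude $\widetilde q^2\in S_{b_1}=S(\widetilde q^1)$, and read off $\Pi_1^{Vertical}(b_1)=\Pi_1(\widetilde q^1,\widetilde q^2)\le\Pi_1^{Vertical}$ from feasibility in the vertical model. This buys two things: the proof is self-contained (no appeal to the external parameterization theorem, nor to \eqref{eq:relpar}), and it exposes that only the half $a_1(\widehat q^1)=\widehat b_1$ of the hypothesis is actually used --- the condition $a_2(\widehat q^2)=b-\widehat b_1$ is needed in the paper's route (to make $\widetilde q^1$ optimal for player~1 in the original shared-constraint GNEP, as required by the cited theorem) but not in yours. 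What the paper's approach buys in exchange is the explicit embedding of the result in the resource-directed decomposition framework, i.e.\ the identity $\bigcup_{b_1}\Pi_1^{Horizontal}(b_1)=\Pi_1^{Horizontal}$, which is of independent interest but strictly stronger than what the proposition needs.
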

\begin{proof}
 Thanks to \cite[Theorem 3.6]{NabTsengFuk09}, we have $\bigcup_{b_1 \in B} \Pi_1^{Horizontal}(b_1) = \Pi_1^{Horizontal}$, and, in turn, 
 $$
 \begin{array}{rcl}
\displaystyle \sup_{b_1 \in B} \max \{ \Pi_1^{Horizontal}(b_1) \} & = &  \sup \left\{\displaystyle \bigcup_{b_1 \in B} \max \{ \Pi_1^{Horizontal}(b_1) \} \right\}\\[20pt]
 & \le & \sup \left\{\displaystyle \bigcup_{b_1 \in B} \Pi_1^{Horizontal}(b_1) \right\} = \sup \{ \Pi_1^{Horizontal} \}.
 \end{array}
 $$
 Therefore, in view of \eqref{eq:rel1} and \eqref{eq:relpar},
 $$
 \displaystyle \sup_{b_1 \in B} \Pi_1^{Uneven}(b_1) \leq \Pi_1^{Vertical},
 $$
 and the thesis follows by \eqref{eq: application 2 inclusion}.
\end{proof}
We remark that assumptions in Proposition \ref{pr: SBP parameterized 2} simply require that, for every choice of $b_1 \in B$, the common resource is entirely consumed by the players.

\begin{remark}\label{rm:algofnond}
As for the parameterized uneven horizontal game, $\Pi_1^{Uneven}(b_1)$ can be computed, for every fixed $b_1 \in B,$ by relying again on the very simple approach described in Remark \ref{rm:algohier}. Furthermore, one can also calculate a single value belonging to $\Pi_1^{Uneven}$ by resorting to a similar procedure as the one just illustrated (but, in general, with more than one leader/follower optimization). It can be proved that this alternating optimization approach converges to an equilibrium of the uneven horizontal game under mild standard conditions. For the sake of brevity and since this kind of study goes out of the scope of this work, we do not go into details.  
\end{remark}

\subsection{$\Pi_2$ depending on both $q^1$ and $q^2$, and players sharing a budget constraint}\label{subsec: application general 2}

Let us consider the general case in which $\Pi_2$ depends also on $q^1$ and players share a common budget constraint as in subsection \ref{subsec: application general 1}.
Both the horizontal
\begin{equation*}
\begin{array}{clccl}
\underset{q^1}{\mbox{maximize}} & \; \Pi_1(q^1,q^2) & \hspace{70pt} & \underset{q^2}{\mbox{maximize}} & \; \Pi_2(q^1,q^2)\\
\mbox{s.t.} & q^1 \in X^1 &  & \mbox{s.t.} & q^2 \in X^2 \\[5pt]
& a_1(q^1) + a_2(q^2) \leq b & & & a_1(q^1) + a_2(q^2) \leq b, \end{array}
\end{equation*}
and the uneven horizontal
\begin{equation*}
\begin{array}{clccl}
\underset{q^1,q^2}{\mbox{maximize}} & \; \Pi_1(q^1,q^2) & \hspace{70pt} & \underset{w^2}{\mbox{maximize}} & \; \Pi_2(q^1,w^2)\\
\mbox{s.t.} & q^1 \in X^1, \quad q^2 \in X^2 &  & \mbox{s.t.} & w^2 \in X^2 \\[5pt]
& a_1(q^1) + a_2(q^2) \leq b & & & a_1(q^1) + a_2(w^2) \leq b, \\[5pt]
& \Pi_2(q^1,q^2) \geq \Pi_2(q^1,w^2) & & &
\end{array}
\end{equation*}
models are GNEPs. 
Clearly, in order to establish connections between the vertical
\begin{equation*}
\begin{array}{cl}
\underset{q^1, q^2}{\mbox{maximize}} & \Pi_1(q^1,q^2) \\
\mbox{s.t.} & q^1 \in X^1 \\[5pt]
& q^2 \in S(q^1),
\end{array}
\end{equation*}
where $S(q^1)$ denotes the solution set of the lower level problem
$$
\begin{array}{cl}
\underset{w^2}{\mbox{maximize}} & \Pi_2(q^1,w^2)\\
\mbox{s.t.} & w^2 \in X^2 \\[5pt]
& a_1(q^1) + a_2(w^2) \leq b,
\end{array}
$$
and the uneven horizontal models, one can resort to Theorems \ref{th: solution sets inclusion} and \ref{th: local solution sets inclusion}, or, if there is no budget (shared) constraint, to Corollary \ref{co: g vanish}.
In any case (see the definitions introduced in subsection \ref{subsec: application hierarchical}), we have
$$
\sup \{ \Pi_1^{Horizontal} \} \leq \sup \{ \Pi_1^{Uneven} \} \leq \Pi_1^{Vertical}.
$$
Let us consider now the interesting case in which one wants to design the market in order to easily compute a solution of the vertical model. For this to be done, one can exploit Proposition \ref{pr: easy points}: letting $(\widehat q^1, \widehat q^2, \widehat w^2)$ be a solution of the following (jointly convex) GNEP
\begin{equation*}
\begin{array}{clccl}
\underset{q^1,q^2}{\mbox{maximize}} & \; \Pi_1(q^1,q^2) & \hspace{70pt} & \underset{w^2}{\mbox{maximize}} & \; \Pi_2(q^1,w^2)\\
\mbox{s.t.} & q^1 \in X^1, \quad q^2 \in X^2 &  & \mbox{s.t.} & w^2 \in X^2 \\[5pt]
& a_1(q^1) + a_2(q^2) \leq b & & & a_1(q^1) + a_2(w^2) \leq b, 
\end{array}
\end{equation*}
such that $\Pi_2(\widehat q^1,\widehat q^2) \geq \Pi_2(\widehat q^1,\widehat w^2)$, $(\widehat q^1, \widehat q^2)$ is an easy solution (see Proposition \ref{pr: easy points}) of the vertical model.
In the same spirit, an alternative and easier way to compute such solutions makes use of variational inequalities: indeed, $(\widehat q^1, \widehat q^2) \in T = \{(q^1,q^2) \in X^1 \times X^2 \, : \, a_1(q^1) + a_2(q^2) \leq b \}$ such that
$$
\nabla \Pi_1 (\widehat q^1, \widehat q^2)\trt \left((q^1,q^2)-(\widehat q^1, \widehat q^2)\right) \leq 0, \qquad
\nabla_{q^2} \Pi_2 (\widehat q^1, \widehat q^2)\trt \left(q^2-\widehat q^2\right) \leq 0 \qquad \forall \, (q^1,q^2) \in T,
$$
is an easy solution of the vertical model.

\bibliographystyle{spmpsci}      
\bibliography{Surbib}   

\begin{thebibliography}{10}
\providecommand{\url}[1]{{#1}}
\providecommand{\urlprefix}{URL }
\expandafter\ifx\csname urlstyle\endcsname\relax
  \providecommand{\doi}[1]{DOI~\discretionary{}{}{}#1}\else
  \providecommand{\doi}{DOI~\discretionary{}{}{}\begingroup
  \urlstyle{rm}\Url}\fi

\bibitem{allende2013solving}
Allende, G., Still, G.: Solving bilevel programs with the {KKT}-approach.
\newblock Mathematical programming \textbf{138}(1-2), 309--332 (2013)

\bibitem{aussel2013electricity}
Aussel, D., Correa, R., Marechal, M.: Electricity spot market with transmission
  losses.
\newblock {M}ANAGEMENT \textbf{9}(2), 275--290 (2013)

\bibitem{bard1983algorithm}
Bard, J.: An algorithm for solving the general bilevel programming problem.
\newblock Mathematics of Operations Research \textbf{8}(2), 260--272 (1983)

\bibitem{colson2007overview}
Colson, B., Marcotte, P., Savard, G.: An overview of bilevel optimization.
\newblock Annals of operations research \textbf{153}(1), 235--256 (2007)

\bibitem{dempe2002foundations}
Dempe, S.: Foundations of bilevel programming.
\newblock Springer Science \& Business Media (2002)

\bibitem{dempe2003annotated}
Dempe, S.: Annotated bibliography on bilevel programming and mathematical
  programs with equilibrium constraints.
\newblock Optimization \textbf{52}(3), 333--359 (2003)

\bibitem{dempe2012bilevel}
Dempe, S., Dutta, J.: Is bilevel programming a special case of a mathematical
  program with complementarity constraints?
\newblock Mathematical programming \textbf{131}(1-2), 37--48 (2012)

\bibitem{dempe2007new}
Dempe, S., Dutta, J., Mordukhovich, B.: New necessary optimality conditions in
  optimistic bilevel programming.
\newblock Optimization \textbf{56}(5-6), 577--604 (2007)

\bibitem{dempe2014solution}
Dempe, S., Franke, S.: Solution algorithm for an optimistic linear
  {S}tackelberg problem.
\newblock Computers \& Operations Research \textbf{41}, 277--281 (2014)

\bibitem{dempe2015solution}
Dempe, S., Franke, S.: On the solution of convex bilevel optimization problems.
\newblock Computational Optimization and Applications pp. 1--19 (2015)

\bibitem{dempe2012sensitivity}
Dempe, S., Mordukhovich, B., Zemkoho, A.: Sensitivity analysis for two-level
  value functions with applications to bilevel programming.
\newblock SIAM Journal on Optimization \textbf{22}(4), 1309--1343 (2012)

\bibitem{dempe2011generalized}
Dempe, S., Zemkoho, A.: The generalized {M}angasarian-{F}romowitz constraint
  qualification and optimality conditions for bilevel programs.
\newblock Journal of optimization theory and applications \textbf{148}(1),
  46--68 (2011)

\bibitem{dempe2013bilevel}
Dempe, S., Zemkoho, A.: The bilevel programming problem: reformulations,
  constraint qualifications and optimality conditions.
\newblock Mathematical Programming \textbf{138}(1-2), 447--473 (2013)

\bibitem{dorsch2013intrinsic}
Dorsch, D., Jongen, H.T., Shikhman, V.: On intrinsic complexity of nash
  equilibrium problems and bilevel optimization.
\newblock Journal of Optimization Theory and Applications \textbf{159}(3),
  606--634 (2013)

\bibitem{dreves2011solution}
Dreves, A., Facchinei, F., Kanzow, C., Sagratella, S.: On the solution of the
  {KKT} conditions of generalized {N}ash equilibrium problems.
\newblock SIAM Journal on Optimization \textbf{21}(3), 1082--1108 (2011)

\bibitem{facchinei1999smoothing}
Facchinei, F., Jiang, H., Qi, L.: A smoothing method for mathematical programs
  with equilibrium constraints.
\newblock Mathematical programming \textbf{85}(1), 107--134 (1999)

\bibitem{FacchKanSu}
Facchinei, F., Kanzow, C.: Generalized {N}ash equilibrium problems.
\newblock Annals of Operations Research \textbf{175}(1), 177--211 (2010)

\bibitem{facchinei2011partial}
Facchinei, F., Lampariello, L.: Partial penalization for the solution of
  generalized {N}ash equilibrium problems.
\newblock Journal of Global Optimization \textbf{50}(1), 39--57 (2011)

\bibitem{facchlampsagr2011sur}
Facchinei, F., Lampariello, L., Sagratella, S.: Recent advancements in the
  numerical solution of generalized {N}ash equilibrium problems.
\newblock Quaderni di Matematica - Volume in ricordo di Marco {D'}Apuzzo
  \textbf{27}, 137--174 (2012)

\bibitem{FacchPangBk}
Facchinei, F., Pang, J.S.: Finite-{D}imensional {V}ariational {I}nequalities
  and {C}omplementarity {P}roblems.
\newblock Springer (2003)

\bibitem{facchinei2011computation}
Facchinei, F., Sagratella, S.: On the computation of all solutions of jointly
  convex generalized {N}ash equilibrium problems.
\newblock Optimization Letters \textbf{5}(3), 531--547 (2011)

\bibitem{fletcher2006local}
Fletcher, R., Leyffer, S., Ralph, D., Scholtes, S.: Local convergence of {SQP}
  methods for mathematical programs with equilibrium constraints.
\newblock SIAM Journal on Optimization \textbf{17}(1), 259--286 (2006)

\bibitem{hu2013existence}
Hu, M., Fukushima, M.: Existence, uniqueness, and computation of robust {N}ash
  equilibria in a class of multi-leader-follower games.
\newblock SIAM Journal on Optimization \textbf{23}(2), 894--916 (2013)

\bibitem{lampariello2015matter}
Lampariello, L., Sagratella, S.: It is a matter of hierarchy: a {N}ash
  equilibrium problem perspective on bilevel programming.
\newblock Tech. rep., Department of Computer, Control and Management
  Engineering, Sapienza University of Rome (2015)

\bibitem{lin2014solving}
Lin, G.H., Xu, M., Ye, J.: On solving simple bilevel programs with a nonconvex
  lower level program.
\newblock Mathematical Programming \textbf{144}(1-2), 277--305 (2014)

\bibitem{luo1996mathematical}
Luo, Z.Q., Pang, J.S., Ralph, D.: Mathematical programs with equilibrium
  constraints.
\newblock Cambridge University Press (1996)

\bibitem{mitsos2008global}
Mitsos, A., Lemonidis, P., Barton, P.: Global solution of bilevel programs with
  a nonconvex inner program.
\newblock Journal of Global Optimization \textbf{42}(4), 475--513 (2008)

\bibitem{NabTsengFuk09}
Nabetani, K., Tseng, P., Fukushima, M.: Parametrized variational inequality
  approaches to generalized {N}ash equilibrium problems with shared
  constraints.
\newblock Computational Opt. and Appl. \textbf{48}(3), 423--452 (2011)

\bibitem{outrata1988note}
Outrata, J.: A note on the usage of nondifferentiable exact penalties in some
  special optimization problems.
\newblock Kybernetika \textbf{24}(4), 251--258 (1988)

\bibitem{outrata1990numerical}
Outrata, J.: On the numerical solution of a class of {S}tackelberg problems.
\newblock Zeitschrift f{\"u}r Operations Research \textbf{34}(4), 255--277
  (1990)

\bibitem{FukPang09}
Pang, J.S., Fukushima, M.: Quasi-variational inequalities, generalized {N}ash
  equilibria, and multi-leader-follower games.
\newblock Computational Management Science \textbf{2}, 21--56 (2009)

\bibitem{sagratella2015computing}
Sagratella, S.: Computing all solutions of nash equilibrium problems with
  discrete strategy sets.
\newblock SIAM Journal on Optimization  (2016).
\newblock To appear

\bibitem{solodov2007explicit}
Solodov, M.: An explicit descent method for bilevel convex optimization.
\newblock Journal of Convex Analysis \textbf{14}(2), 227 (2007)

\bibitem{von1934marktform}
Stackelberg, H.V.: Marktform und Gleichgewicht.
\newblock Springer (1934)

\bibitem{vicente1994bilevel}
Vicente, L., Calamai, P.: Bilevel and multilevel programming: A bibliography
  review.
\newblock Journal of Global optimization \textbf{5}(3), 291--306 (1994)

\bibitem{xu2014smoothing}
Xu, M., Ye, J.: A smoothing augmented {L}agrangian method for solving simple
  bilevel programs.
\newblock Computational Optimization and Applications \textbf{59}(1-2),
  353--377 (2014)

\bibitem{ye2006constraint}
Ye, J.: Constraint qualifications and {KKT} conditions for bilevel programming
  problems.
\newblock Mathematics of Operations Research \textbf{31}(4), 811--824 (2006)

\bibitem{ye2010new}
Ye, J., Zhu, D.: New necessary optimality conditions for bilevel programs by
  combining the {MPEC} and value function approaches.
\newblock SIAM Journal on Optimization \textbf{20}(4), 1885--1905 (2010)

\bibitem{zemkoho2014solving}
Zemkoho, A.: Solving ill-posed bilevel programs.
\newblock Set-Valued and Variational Analysis pp. 1--26 (2014)

\end{thebibliography}
\end{document}